\newtheorem{theorem}{Theorem}[section]
\newtheorem*{theorem*}{Theorem}
\theoremstyle{definition}
\newtheorem{definition}[theorem]{Definition}
\newtheorem{proposition}[theorem]{Proposition}
\newtheorem{observation}[theorem]{Observation}
\newtheorem*{observation*}{Observation}
\newtheorem{fact}[theorem]{Fact}
\newtheorem*{fact*}{Fact}
\newtheorem{corollary}[theorem]{Corollary}
\newtheorem{remark}[theorem]{Remark}
\newtheorem*{remark*}{Remark}
\newtheorem{question}[theorem]{Question}
\theoremstyle{plain}
\newtheorem{lemma}[theorem]{Lemma}
\renewcommand{\tt}{^{(t)}}
\newcommand{\bra}[1]{ \left( #1 \right) }
\newcommand{\abs}[1]{\left|#1\right|}
\newcommand{\norm}[1]{\left\lVert #1 \right\rVert}
\newcommand{\cN}{\mathcal{N}}
\newcommand{\fpa}[1]{\left\lVert #1 \right\rVert_{\mathbb{R}/\mathbb{Z}}}
\newcommand{\e}{\varepsilon}
\renewcommand{\a}{\alpha}
\renewcommand{\b}{\beta}
\newcommand{\de}{\delta}
\newcommand{\Borel}{\mathscr{B}}
\DeclareMathOperator*{\EE}{\mathbb{E}}
\newcommand{\EEl}[2]{\EE_{#1 \leq #2}}
\newcommand{\NN}{\mathbb{N}}
\newcommand{\QQ}{\mathbb{Q}}
\newcommand{\PP}{\mathbb{P}}
\newcommand{\ZZ}{\mathbb{Z}}
\newcommand{\RR}{\mathbb{R}}
\newcommand{\TT}{\mathbb{T}}
\newcommand{\cA}{\mathscr{A}}
\newcommand{\cB}{\mathscr{B}}
\newcommand{\fX}{\mathscr{X}}
\newcommand{\fZ}{\mathscr{Z}}
\newcommand{\fY}{\mathscr{Y}}
\newcommand{\densNat}{\mathrm{d}}
\newcommand{\Mod}[1]{\bmod{#1}}
\newcommand{\set}[2]{\left\{ #1 \ \middle| \ #2 \right\} }
\newcommand{\charN}{\chi}
\newcommand{\charX}[1]{\chi(#1)}
\newcommand{\lc}{\operatorname{lc}}
\newcommand{\spr}[2]{\left< #1,\ #2\right>}
\newcommand{\normHK}[1]{{\left\vert\kern-0.25ex\left\vert\kern-0.25ex\left\vert #1 
    \right\vert\kern-0.25ex\right\vert\kern-0.25ex\right\vert}}
\numberwithin{equation}{section}
\begin{document}

\begin{abstract}

We investigate polynomial patterns which can be guaranteed to appear in \emph{weakly mixing} sets introduced by Furstenberg and studied by Fish. In particular, we prove that if $\cA$ is a weakly mixing set and $p(x) \in \ZZ[x]$ a polynomial of odd degree with positive leading coefficient, then all sufficiently large integers $N$ can be represented as $N = n_1 + n_2$, where $p(n_1) + m,\ p(n_2) + m \in \cA$ for some $m \in \cA$.

\end{abstract}

\baselineskip=17pt

\title[Weakly mixing sets and polynomial equations]{Weakly mixing sets of integers \\ and polynomial equations}

\author[J. Konieczny]{Jakub Konieczny}
\address{Mathematical Institute \\ 
University of Oxford\\
Andrew Wiles Building \\
Radcliffe Observatory Quarter\\
Woodstock Road\\
Oxford\\
OX2 6GG}
\email{jakub.konieczny@gmail.com}

\date{}

\maketitle

\setcounter{section}{0}

\section{Introduction}
It is a fundamental question in additive combinatorics to determine which types of structure are guaranteed to appear in a given set of the integers. We begin with citing the celebrated theorem of Szemer{\'e}di \cite{Szemeredi1969}, whose ergodic theory proof by Furstenberg \cite{Furstenberg1977} paved the way to applications of ergodic theory in combinatorial number theory.
\begin{theorem}[Szemer{\'e}di]\label{thm:Szemeredi}
	Let $\cA \subset \NN$ be a set with positive upper Banach density. 
	Then, for any $k \in \NN$, there exist $n,m \in \NN$ such that $n,\ n+m,\ n + 2m, \dots n+km \in \cA$. 
\end{theorem}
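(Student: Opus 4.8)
The plan is to reproduce Furstenberg's ergodic-theoretic proof, which also supplies the template for the polynomial strengthenings developed later in the paper. The first ingredient is the \emph{Furstenberg correspondence principle}. Given $\cA \subset \NN$ with $\densBanUp(\cA) > 0$, one forms the orbit closure of the indicator sequence $\mathbf{1}_\cA \in \{0,1\}^\ZZ$ under the left shift $T$, equips it with a shift-invariant Borel probability measure $\mu$ obtained as a weak-$*$ limit of empirical averages along an interval sequence realising the upper Banach density, and sets $A = \{x : x(0) = 1\}$. This produces a measure-preserving system $(X, \Borel, \mu, T)$ with $\mu(A) = \densBanUp(\cA) > 0$ such that, for every finite tuple $n_0, n_1, \dots, n_k \in \NN$,
\[ \densBanUp\bra{\bigcap_{i=0}^{k} (\cA - n_i)} \;\geq\; \mu\bra{\bigcap_{i=0}^{k} T^{-n_i} A}. \]
Hence it suffices to find, for each $k$, some $m \in \NN$ making the right-hand side with $n_i = im$ strictly positive; the set on the left is then nonempty, which yields $n$ with $n, n+m, \dots, n+km \in \cA$.

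The second ingredient is precisely this statement, \emph{Furstenberg's multiple recurrence theorem}: in any measure-preserving system $(X, \Borel, \mu, T)$, for any $A$ with $\mu(A) > 0$ and any $k$,
\[ \mu\bra{A \cap T^{-m}A \cap T^{-2m}A \cap \dots \cap T^{-km}A} > 0 \quad \text{for some } m \in \NN. \]
I would prove this by induction along the structure of the system. After passing to an ergodic component one analyses the Furstenberg--Zimmer tower: the maximal distal factor is a (transfinite) inverse limit of compact, i.e. isometric, extensions, and the whole system is a relatively weakly mixing extension of that factor. The property at stake — call it SZ, that every positive-measure set admits such a common difference $m$ — is then shown to be (i) trivially true for the one-point system, (ii) preserved under compact extensions via a van der Waerden / almost-periodicity argument on the fibres, (iii) preserved under inverse limits, and (iv) preserved under relatively weakly mixing extensions, because there the relevant multicorrelations are, on average, controlled by those of the factor. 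Chaining (i)--(iv) up the tower gives SZ for the full system.

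The main obstacle is the inductive core, steps (ii)--(iv): the transfer from the base to a compact or weakly mixing extension has to be performed \emph{relatively}, i.e. fibrewise over the factor, which forces one to set up the relative notions of compactness and weak mixing — conditional expectations, relatively almost periodic functions, the relative ergodic and relative van der Waerden arguments — and to verify that the quantitative lower bounds survive the passage to the inverse limit. Everything else — the correspondence principle, the reduction to ergodic systems, and the final combinatorial deduction — is comparatively routine once this recurrence statement is available.
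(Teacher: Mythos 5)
The paper does not prove Theorem \ref{thm:Szemeredi}; it is cited as a foundational background result, with references to Szemer\'edi's original combinatorial proof and to Furstenberg's ergodic-theoretic proof. Your sketch correctly reproduces the outline of Furstenberg's proof --- the correspondence principle reducing the combinatorial statement to multiple recurrence, and the structure-theoretic induction (one-point system, compact extensions, inverse limits, weakly mixing extensions) establishing multiple recurrence --- which is exactly the approach the paper alludes to, so there is no divergence to report. Two small caveats for precision: the correspondence principle should be stated a bit more carefully (one first passes to an ergodic component or argues that the limit measure can be chosen so that $\mu(A)\ge\densBanUp(\cA)$, and one needs the inequality for finitely many shifts $T^{-im}A$, $0\le i\le k$, which you do state correctly); and the inductive core (ii)--(iv) is, as you yourself flag, where essentially all of the work lives --- the relative van der Waerden argument for compact extensions and the relative weak mixing estimate are each substantial, so the sketch is an honest roadmap rather than a proof.
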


Phrased differently, the theorem asserts that any set of positive density contains arithmetic progressions of arbitrary length. Many generalisations of this theorem exist. A theorem of S\'{a}rk\"{o}zy \cite{Sarkozy1978c} (see also \cite{Furstenberg1977}, \cite{Furstenberg1981}) asserts that in sets of positive upper Banach density one can find patterns such as $n,n+m^2$. In approximately the same time, but different direction, a result of Furstenberg and Katznelson \cite{FurstenbergKatznelson1978} pertains to configurations in higher dimensions, showing that a set $\cA \subset \NN^r$ of positive upper Banach density contains the configuration $n + m F$, where $F \subset \ZZ^r$ is any finite set. 

Returning to the polynomial in a single dimension, Bergelson and Leibman \cite{BergelsonLeibman1996} were able to improve S\'{a}rk\"{o}zy's theorem to several polynomials vanishing at $0$. This result was ultimately strengthened by these authors and Lesigne \cite{BergelsonLeibmanLesigne2007} to deal with {intersective} families of polynomials. A sequence $p_i(x) \in \ZZ[x]$, $i \in [r]$ is \emph{intersective} if for any integer $k$ there exists $n_k \in \NN$ such that $k \mid p_i(n_k)$ for all $i \in [r]$. 

\begin{theorem}[Bergelson, Leibman, Lesigne]\label{thm:Belle}
	Let $\cA \subset \NN$ be a set with positive upper Banach density, and let $p_i(x) \in \ZZ[x]$ for $i \in [r]$ be an intersective family of polynomials with $p_i(n) \to \infty$ as $n \to \infty$. Then, there exists $n,m \in \NN$ such that $m, m+p_1(n), \dots, m+p_{r}(n) \in \cA$.
\end{theorem}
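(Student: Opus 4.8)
I would follow the ergodic-theoretic strategy that underlies all results of this type, which in this instance is, in outline, the one of Bergelson--Leibman--Lesigne themselves; the plan has three stages — pass from combinatorics to dynamics, reduce the dynamical problem to a nilsystem, and then use equidistribution together with the intersectivity hypothesis. First I would invoke the Furstenberg correspondence principle to obtain an invertible probability measure-preserving system $(X,\Borel,\mu,T)$ and a set $A \in \Borel$ with $\mu(A) = \densBanUp(\cA) > 0$ such that, for every $n \in \NN$,
\[
\densBanUp\bigl(\cA \cap (\cA-p_1(n)) \cap \dots \cap (\cA-p_r(n))\bigr) \;\ge\; \mu\bigl(A \cap T^{-p_1(n)}A \cap \dots \cap T^{-p_r(n)}A\bigr).
\]
Hence it is enough to produce a single $n$ — I will in fact get a set of $n$ of positive lower density — for which the right-hand side is positive; any $m$ in the resulting nonempty set on the left then satisfies $m,\, m+p_1(n),\, \dots,\, m+p_r(n) \in \cA$. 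The hypothesis $p_i(n)\to\infty$ enters only to ensure that these translates eventually lie in $\NN$, and is irrelevant to the recurrence itself.

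Next I would study the Cesàro averages $\frac1N \sum_{n=1}^N \mu\bigl(A \cap T^{-p_1(n)}A \cap \dots \cap T^{-p_r(n)}A\bigr)$ and aim to bound their $\liminf$ below by a positive quantity. By the convergence and structure theory for polynomial nonconventional ergodic averages (Host--Kra, Leibman, Ziegler), these averages converge, and the limit is unchanged when $A$ is replaced by its conditional expectation onto a characteristic factor which is an inverse limit of nilsystems; after passing to an ergodic component and approximating by a finite stage of that inverse limit, I may assume $X = G/\Gamma$ is a nilmanifold and $T$ a nilrotation. On such $X$ the orbit closure $Y \subseteq X^{r+1}$ of the diagonal point $(x,x,\dots,x)$ under the commuting transformations $n \mapsto T^{p_0(n)} \times T^{p_1(n)} \times \dots \times T^{p_r(n)}$ (with $p_0 \equiv 0$) is a subnilmanifold, and by Leibman's equidistribution theorem the average in question equals $\int_Y 1_A \otimes \dots \otimes 1_A \, dm_Y$, where $m_Y$ is the Haar measure on $Y$; this is positive as soon as $Y$ meets the genuine diagonal $\{(z,\dots,z):z\in X\}$, since then a homogeneity argument of Bergelson--Leibman type together with Cauchy--Schwarz forces the integral to be positive. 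To see that $Y$ meets the diagonal I would split $G$ into its identity component and its finite component group: on the connected part (the torus and higher unipotent coordinates) Weyl-type equidistribution shows the orbit fills out the expected subnilmanifold, which contains diagonal directions, while modulo the finite part, membership of a diagonal point reduces precisely to the existence of an $n$ with $p_1(n) \equiv \dots \equiv p_r(n) \equiv 0$ modulo the single integer equal to the period of the finite quotient — which is exactly what the intersective hypothesis supplies.

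The technical heart, and the step I expect to be the main obstacle, is this last matching of the profinite part of the nilsystem against the intersectivity condition, together with the extraction of a genuine quantitative lower bound for the limiting integral once a diagonal point is known to lie in $Y$; this is where the full strength of the polynomial equidistribution theory on nilmanifolds and of the Host--Kra structure theorem is needed, and where the classical polynomial Szemerédi theorem (the case $p_i(0)=0$, in which $n=0$ already supplies the diagonal point) is genuinely easier. As an alternative to the nilsystem machinery one can run Bergelson's PET induction: repeated application of the van der Corput inequality reduces the polynomial average to averages over polynomial families of strictly smaller ``weight'', until one reaches linear patterns controlled by Szemerédi's theorem and van der Waerden--type colourings, or weakly mixing components whose contribution vanishes; there the bookkeeping of the PET scheme is the difficulty, and the intersective hypothesis is again exactly what one needs at the linear base case in order to accommodate nonzero constant terms.
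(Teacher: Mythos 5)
This statement is cited in the paper as background from Bergelson, Leibman and Lesigne \cite{BergelsonLeibmanLesigne2007}; the paper gives no proof of it, so there is nothing to compare your argument against. What you have written is a plausible outline of the route taken in the cited source: Furstenberg correspondence, reduction to a characteristic factor that is an inverse limit of nilsystems via the Host--Kra/Ziegler structure theory, and then Leibman-type equidistribution on nilmanifolds, with the intersectivity hypothesis entering precisely to dispose of the finite (profinite) component of the nilmanifold. That is indeed the correct shape of the argument.

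Two caveats on your sketch, if you intend to push it further. First, the step ``$Y$ meets the diagonal, hence the integral is positive'' is not automatic; meeting the diagonal does not by itself give positive measure to the relevant fibre, and what is actually needed is a lower bound of Bergelson--Host--Kra type (positivity of $\int \prod T^{p_i(n)} 1_A$ along a syndetic set of $n$ on the nilfactor), obtained from a more careful analysis of how the orbit closure sits over the diagonal. Second, the ``alternative'' you propose, running PET induction down to a linear base case handled by Szemer\'edi's theorem, does not work as described: PET reduces weight but does not by itself produce the structured (nil- or compact-abelian) component where the recurrence actually lives, and the original Bergelson--Leibman polynomial Szemer\'edi theorem already requires substantially more than PET plus Szemer\'edi (IP-van der Corput, a colouring/partition argument, and eventually the full structure theory in the later treatments). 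In the present paper PET is used only in the \emph{weakly mixing} setting, where the limit of the average is simply $\prod_i \int f_i\, d\mu$ and no structured component survives; that is a much weaker assertion than Theorem~\ref{thm:Belle}, and the paper never claims otherwise.
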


Note that the conclusion of the above theorem fails if $p_i$ are not intersective. Moreover, the offending set $\cA$ can be very structured: indeed, an (infinite) arithmetic progression will do. 

On the other hand, one expects that more can be proved if $\cA$ is forced to be \emph{unstructured}. In the extreme case, when $\cA$ is a random set, constructed by declaring $n \in \cA$ with a certain probability $\rho > 0$, independently for all $n$, then with probability $1$, $\cA$ will contain many occurrences of the pattern, say, $m, m+p_1(n), m+p_2(n), \dots, m + p_r(n)$ for any polynomials (or, for that matter, any functions $p_1,p_2,\dots,p_r$). Thus, it is of interest to see which notions of pseudo-randomness guarantee existence of various patterns.

The class of \emph{weakly-mixing} sets was proposed by Furstenberg and investigated by Fish \cite{Fish2007a}, \cite{Fish2007b}. Roughly speaking, a weakly mixing set is a set of return times of a generic point to a neighbourhood of its origin in a weakly-mixing measure preserving system $\fX_\cA$. While the precise definitions will be given in due course, at this point we remark that weakly mixing sets include \emph{normal} sets, i.e.\ those sets for which any pattern of $0$'s and $1$'s appears in the characteristic sequence of the set with the same frequency as for a genuinely random set.

In \cite{Fish2007a}, Fish characterised all linear patterns which are guaranteed to appear in a weakly mixing set. We give a special (yet representative) case of this characterisation.
\begin{theorem}[Fish \cite{Fish2007a}]\label{thm:Fish-1}
	Let $\cA \subset \NN$ be a weakly mixing set. Suppose that $a_i, b_i \in \NN$ and $c_i \in \ZZ$ for $i \in [r]$ are such that for all $i \neq j$ we have $\det \begin{bmatrix} a_i & a_j \\ b_i & b_j \end{bmatrix} \neq 0$. Then, there exist $n,m \in \NN$ such that $a_i n + b_i m + c_i \in \cA$ for all $i \in [r]$.
\end{theorem}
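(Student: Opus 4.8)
Let $\fX_\cA=(X,\mu,T)$ be the weakly mixing measure preserving system witnessing that $\cA$ is weakly mixing. Passing to the symbolic model we may assume that $X\subseteq\fp{0,1}^{\ZZ}$ with $T$ the shift, that $x_0\in X$ is a point generic for $\mu$, that $U:=\set{y\in X}{y(0)=1}$ is clopen with $\mu(U)>0$, and that $\cA=\set{n\in\NN}{T^{n}x_0\in U}$. Write $\ell_i(n,m):=a_i n+b_i m+c_i$. The plan is to prove the stronger, quantitative assertion
\[
\frac{1}{N^{2}}\sum_{n=1}^{N}\sum_{m=1}^{N}\ \prod_{i=1}^{r}\mathbf{1}_U\bra{T^{\ell_i(n,m)}x_0}\ \xrightarrow[N\to\infty]{}\ \prod_{i=1}^{r}\int_X\mathbf{1}_U\,d\mu=\mu(U)^{r}>0 .
\]
Granting this, the theorem follows at once: only $o(N^{2})$ of the pairs $(n,m)\in[N]^{2}$ can have $\ell_i(n,m)\notin\NN$ for some $i$ (each $\ell_i$ has non-negative linear part, hence is positive off a density-zero subset of $\NN^{2}$), so for all large $N$ there is a pair $(n,m)\in\NN^{2}$ with $\ell_i(n,m)\in\cA$ for every $i$. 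That $U$ is clopen is used here and throughout: it renders $\mathbf{1}_U$ continuous, hence admissible in the equidistribution arguments below.

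Everything thus reduces to an equidistribution statement for the single orbit $(T^{n}x_0)_n$ along the family $\ell_1,\dots,\ell_r$, whose linear parts $(a_i,b_i)$ are \emph{pairwise} linearly independent --- which is precisely the determinant hypothesis. I would prove it in two layers. The first layer is the measure-theoretic template: for all $f_1,\dots,f_r\in L^{\infty}(\mu)$ one has, in $L^{2}(\mu)$,
\[
\frac{1}{N^{2}}\sum_{n=1}^{N}\sum_{m=1}^{N}\ \prod_{i=1}^{r}f_i\bra{T^{\ell_i(n,m)}x}\ \longrightarrow\ \prod_{i=1}^{r}\int_X f_i\,d\mu .
\]
This holds because $\fX_\cA$ is weakly mixing --- equivalently, its Kronecker factor is trivial --- while the pairwise independence of the $(a_i,b_i)$ forces the trivial factor to be characteristic for this average; it is the linear, two-variable instance of Bergelson's weakly mixing polynomial ergodic theorem, proved by iterated van der Corput differencing, using that weak mixing is inherited by every power $T^{k}$ with $k\neq0$ and by finite Cartesian products.

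The second layer --- which I expect to be the genuine obstacle --- is to descend from ``$\mu$-almost every $x$'' to the \emph{prescribed} generic point $x_0$. Genericity only provides $\frac1M\sum_{h=1}^{M}g(T^{h}x_0)\to\int_X g\,d\mu$ for a \emph{single} continuous function $g$, and this does not by itself reach a two-parameter product over several forms. The remedy is to carry out the van der Corput reduction directly along the orbit of $x_0$ rather than in $L^{2}(\mu)$: each differencing step replaces the current average by an average, over a difference parameter, of correlation averages of the same shape --- with more, and shifted, forms whose linear parts remain pairwise independent by the determinant condition --- and the scheme is arranged so that its base cases are bona fide one-parameter Ces\`{a}ro averages $\frac1M\sum_{h}\widetilde g(T^{h}x_0)$, to which genericity applies, while the surviving ``diagonal'' contributions are pinned to the product $\prod_i\mu(U)$ precisely because weak mixing forces every nontrivial correlation $\int_X(\mathbf{1}_U\circ T^{j})(\mathbf{1}_U\circ T^{k})\,d\mu-\mu(U)^{2}$ to tend to $0$ in density. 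Equivalently, one may invoke the principle --- latent in Furstenberg's analysis of weak mixing \cite{Furstenberg1981} and made explicit by Fish \cite{Fish2007a} --- that a generic point of a weakly mixing system equidistributes in $X^{r}$ with respect to $\mu^{\otimes r}$ along any pairwise independent family of linear forms; the displayed limit is the special case $f_1=\dots=f_r=\mathbf{1}_U$ of that principle. Assembling the two layers yields the displayed limit, and with it the theorem.
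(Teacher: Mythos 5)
First, a point of orientation: Theorem~\ref{thm:Fish-1} is a \emph{cited} result of Fish~\cite{Fish2007a}; the present paper states it as background and gives no proof of it. There is therefore no internal argument to compare yours against, and I assess the proposal on its own terms.

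The skeleton you lay out is reasonable, and you are right to flag ``layer~2'' --- the passage from an $L^2(\mu)$ statement to the one prescribed generic point $x_0$ --- as the real obstacle. But that is exactly where the proposal stops being a proof. The $L^2$ convergence in layer~1 is an almost-everywhere assertion and says nothing about $x_0$. Genericity of $x_0$ (which is what the definition of a weakly mixing set gives you) is, by definition, the convergence of \emph{one-parameter} Ces\`{a}ro averages of a \emph{single} continuous function along the orbit $\{S^n x_0\}_n$. What you need to control is the two-parameter, $r$-fold correlation
\[
\frac{1}{N^{2}}\sum_{n,m\le N}\ \prod_{i=1}^r 1_U\!\left(S^{a_i n + b_i m + c_i} x_0\right),
\]
and the claim that this converges to $\mu(U)^r$ is not a consequence of genericity plus weak mixing by any argument you exhibit. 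Your stated remedy --- ``carry out the van der Corput reduction directly along the orbit of $x_0$\ldots arranged so that its base cases are bona fide one-parameter Ces\`{a}ro averages'' --- names the difficulty rather than resolving it: after a pointwise van der Corput differencing in either variable one still faces a product of several shifts of $1_U$ evaluated along a multi-parameter family of linear forms of the same complexity, and no mechanism is supplied for collapsing this to a single-function, single-parameter average to which genericity applies. The concluding appeal to ``the principle\ldots made explicit by Fish that a generic point of a weakly mixing system equidistributes in $X^r$ along any pairwise independent family of linear forms'' is, in effect, a restatement of the theorem to be proved.

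One further structural remark, which explains why this theorem does not simply fall out of the machinery developed later in the paper. In Sections~\ref{sec:Uniform}--\ref{sec:Polynomial} the averaging is deliberately asymmetric: one averages over $m\le M$ with $M\to\infty$ \emph{first}, and this works because the forms there all have $m$-coefficient $1$, so the $m$-average is literally $\frac1M\sum_{m\le M}g_n(S^m 1_\cA)$ for a \emph{single} continuous function $g_n=\prod_i S^{p_i(n)}1_U$, a situation to which one-parameter genericity applies directly. In Theorem~\ref{thm:Fish-1} the $b_i$ are arbitrary and in general distinct, so even the asymmetric $m$-average is a genuine multiple ergodic average at $1_\cA$ and the same reduction is unavailable. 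Your symmetric $\frac1{N^2}\sum_{n,m\le N}$ has the same problem from the outset. Closing layer~2 is precisely the content of Fish's argument, and it is missing here.
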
  

For example, a set $\cA$ will contain the pattern $n, m, n+m, n+2m, 2n+m$, which is not guaranteed to appear on the grounds of density alone. However, unlike in the case of a normal set, a weakly-mixing set is not guaranteed to contain two consecutive elements $n,n+1$. When it comes to polynomial patterns, one has a result of a somewhat different flavour, which bears resemblance to Theorem \ref{thm:Belle}.

\begin{theorem}[Fish \cite{Fish2007b}]\label{thm:Fish-2}
	Let $\cA \subset \NN$ be a weakly-mixing set, and let $\cB \subset \NN$ be a set of positive density. Let $p_i(x) \in \ZZ[x]$ for $i \in [r]$ be polynomials of equal degree, such that for all $i \neq j$ we have $\deg(p_i - p_j) > 0$, and $p_i(n) \to \infty$ as $n \to \infty$. Then, for all $n \in \NN$ except\footnote{Here and elsewhere, when a statement is said to hold for ``all $N$ except for a set of $0$ density'', we mean that there exists a set $S \subset \NN$ with density $1$ such that the statement holds for all $N \in S$. The meaning of the phrase ``all but finitely many'' is analogous.}
 for a set of density $0$,  
	there exist $m \in \cB$ such that $p_1(n) - m, p_2(n) - m, \dots , p_r(n) - m \in \cA$.
\end{theorem}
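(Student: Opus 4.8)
\medskip
\noindent\textbf{Proof proposal.}
The plan is to fix the target $N$, prove the conclusion for all but $o(N)$ of the $n\le N$ by a second-moment argument in $n$, and draw the needed pseudorandomness from the weakly mixing system $\fX_\cA=(X,\mu,T)$ representing $\cA$; thus $\mathbf 1_\cA(n)=\mathbf 1_U(T^nx_0)$ for a point $x_0$ generic for $\mu$ and a continuity set $U$ with $\mu(U)=d(\cA)=:\a>0$, and we set $\b:=d^*(\cB)>0$. Let $\ell_N:=\min_i p_i(\floor{N/2})-1$ (which tends to $\infty$), and choose an interval $I_N\subseteq[1,\ell_N]$ with $|I_N|\to\infty$ and $|\cB\cap I_N|\ge(\b-o(1))|I_N|$ --- possible because $d^*(\cB)=\b$, and this is the only use of the positive density of $\cB$. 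For $N/2<n\le N$ put
\[
  Z_n:=\sum_{m\in I_N}\mathbf 1_\cB(m)\prod_{i=1}^{r}\mathbf 1_\cA\bigl(p_i(n)-m\bigr),
\]
so that $p_i(n)-m\ge 1$ throughout and the asserted configuration holds for $n$ whenever $Z_n>0$. Since $\#\{n\in(N/2,N]:Z_n>0\}\ge(\sum_{N/2<n\le N}Z_n)^2\big/\sum_{N/2<n\le N}Z_n^2$ by Cauchy--Schwarz, it suffices to prove $\tfrac 2N\sum_{N/2<n\le N}Z_n=(\a^r+o(1))\,|\cB\cap I_N|$ and $\tfrac 2N\sum_{N/2<n\le N}Z_n^2=(\a^{2r}+o(1))\,|\cB\cap I_N|^2$, with errors tending to $0$ as $N\to\infty$: then $|\cB\cap I_N|$ cancels in the ratio, $\#\{n\in(N/2,N]:Z_n=0\}=o(N)$, and a dyadic decomposition in $N$ shows that the set of $n$ at which the configuration fails has density $0$.

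The first moment, after interchanging the sums, is $\sum_{m\in I_N}\mathbf 1_\cB(m)\bigl(\tfrac 2N\sum_{N/2<n\le N}\prod_i\mathbf 1_\cA(p_i(n)-m)\bigr)$, and the engine is the polynomial analogue, for weakly mixing sets, of Theorem~\ref{thm:Fish-1}: \emph{for essentially distinct $q_1,\dots,q_r\in\ZZ[x]$ with $q_i(n)\to\infty$, the set $\{n:q_i(n)\in\cA\text{ for all }i\}$ has density $\a^r$.} One may deduce this from the ergodic correspondence together with Bergelson's weakly-mixing polynomial ergodic theorem --- in a weakly mixing system the averages $\tfrac 1K\sum_{k\le K}\prod_iT^{q_i(k)}\mathbf 1_U$ of essentially distinct polynomials converge in $L^2$ to $\prod_i\int\mathbf 1_U\,d\mu=\a^r$ --- transferred to the orbit statistics of $x_0$; more robustly, one runs the underlying van der Corput / PET induction directly on the bounded sequence $\mathbf 1_\cA$, using only that its two-point correlations average to $\a^2$ and that $\mathbf 1_\cA-\a$ is asymptotically orthogonal to every phase $n\mapsto e^{2\pi i n\theta}$. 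Applying this with $q_i=p_i-m$ --- essentially distinct because $\deg(p_i-p_j)>0$, the constant shift being harmless --- and arranging the induction so its error depends only on the degrees of the $p_i$, the inner average equals $\a^r+o(1)$ uniformly in $m\in I_N$; summing against $\mathbf 1_\cB$ gives the first-moment asymptotic.

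For the second moment, $Z_n^2=\sum_{m,m'\in I_N}\mathbf 1_\cB(m)\mathbf 1_\cB(m')\prod_i\mathbf 1_\cA(p_i(n)-m)\mathbf 1_\cA(p_i(n)-m')$; the diagonal $m=m'$ contributes only $O(|\cB\cap I_N|)$, which is negligible. For $m\ne m'$, writing $c:=m-m'$ and combining the two factors of index $i$ into $\mathbf 1_{\cA\cap(\cA-c)}(p_i(n)-m)$, we observe that $\cA\cap(\cA-c)$ is \emph{again a weakly mixing set} --- the return-time set of $x_0$ to a continuity set in the same system $\fX_\cA$ --- of density $\a_c:=d(\cA\cap(\cA-c))$ (possibly $0$, e.g.\ $c=1$, in which case the product vanishes for large $n$). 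The polynomial recurrence statement applied to $\cA\cap(\cA-c)$ then gives $\tfrac 2N\sum_{N/2<n\le N}\prod_i\mathbf 1_{\cA\cap(\cA-c)}(p_i(n)-m)=\a_c^{\,r}+o(1)$, so the off-diagonal part of the second moment equals $\sum_{c\ne 0}R(c)\,\a_{|c|}^{\,r}+o(|\cB\cap I_N|^2)$, where $R(c):=\#\{(m,m')\in(\cB\cap I_N)^2:m-m'=c\}$, so that $\sum_c R(c)=|\cB\cap I_N|^2$ and $R(c)\le|\cB\cap I_N|$. Now weak mixing of the system gives $\tfrac 1C\sum_{c\le C}|\a_c-\a^2|\to 0$, so $|\a_{|c|}^{\,r}-\a^{2r}|$ is small off a density-$0$ set of $c$; since $\cB$ occupies a positive proportion of $I_N$, no difference is over-represented ($R(c)\le|\cB\cap I_N|\asymp|I_N|$), and the difference-weighted average of $\a_{|c|}^{\,r}$ over $|c|\le|I_N|$ tends to $\a^{2r}$. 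Hence $\sum_{c\ne 0}R(c)\,\a_{|c|}^{\,r}=(\a^{2r}+o(1))|\cB\cap I_N|^2$, the desired second-moment asymptotic.

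I expect the main obstacle to be exactly the \emph{uniformity} these computations demand: the error term in the polynomial recurrence statement must be $o(1)$ uniformly over the shifts $m\in I_N$ \emph{and} over the auxiliary weakly mixing sets $\cA\cap(\cA-c)$ with $|c|\le|I_N|$, while $|I_N|\to\infty$. Uniformity in $m$ is harmless, since the van der Corput estimates see only the shapes of the polynomials and the correlation statistics, not the shift. Uniformity in $c$ --- a single van der Corput / PET bound valid across the whole family $\{\cA\cap(\cA-c)\}_c$ of weakly mixing sets --- is the delicate point, and is precisely where one must exploit that all these sets live in, and inherit their weak mixing from, the one fixed system $\fX_\cA$, so that the relevant correlation averages are controlled uniformly in $c$. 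Finally, I would note that the density-$0$ exceptional set cannot be removed: already for $r=1$, $p_1(x)=x$, the statement says that $\cA+\cB$ has density $1$, which holds in density but not pointwise.
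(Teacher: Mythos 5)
Your proposal hinges on the auxiliary claim, stated in the first-moment paragraph, that for essentially distinct polynomials $q_1,\dots,q_r$ with $q_i(n)\to\infty$ one has
$d\bigl(\{n : q_i(n)\in\cA\text{ for all }i\}\bigr)=\a^r$; and you propose to deduce it from Bergelson's weakly-mixing polynomial ergodic theorem ``transferred to the orbit statistics of $x_0$''. This claim is false, and the transfer cannot work. Bergelson's theorem gives $L^2(\mu)$-convergence of $\EE_{n\le N}\prod_iT^{q_i(n)}f_i$, which says nothing about the value of these averages \emph{at the specific generic point} $x_0=1_\cA$. Genericity, as defined here, controls only linear (Birkhoff) averages $\EE_{n\le N}g(S^nx_0)$, not averages along polynomial orbits. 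A concrete counterexample: take any weakly mixing set $\cA$ and set $\cA':=\cA\setminus\{q_1(n):n\in\NN\}$. As the paper itself notes in the introduction, removing a density-$0$ set preserves weak mixing, so $\cA'$ is a weakly mixing set of density $\a$; yet $\{n:q_1(n)\in\cA'\}=\emptyset$. The same construction can be shifted by any fixed $m$, so your first-moment estimate $\tfrac 2N\sum_n\prod_i\mathbf 1_\cA(p_i(n)-m)=\a^r+o(1)$ ``uniformly in $m\in I_N$'' fails; the second-moment computation inherits the same defect.

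The deeper structural issue is that you average over the polynomial variable $n$ for fixed $m$, which is exactly the wrong order for exploiting genericity. The paper's proof of the analogous Theorem~\ref{thm:main} runs the other way: one averages over the \emph{shift} variable ($m$) first, so that the expression being averaged is a Birkhoff orbit of a fixed bounded function, and genericity legitimately replaces the $m$-average by an integral over $\mu$; only then, \emph{inside} the integral and after Cauchy--Schwarz, does one meet the polynomial $n$-average, which is handled by Bergelson's theorem (or here by the uniform version, Theorem~\ref{ergo::thm:PET-uniform-1}) precisely in the $L^2$-sense that theorem actually provides. Any correct proof of Theorem~\ref{thm:Fish-2} must use this order of operations. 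You should also note a subtlety you do not address: with the minus sign $p_i(n)-m$, averaging over $m$ produces a backward window average $\EE_{m\le M}g(S^{K-m}x_0)$ with $M\ll K=p_i(n)$, which is again not controlled by one-sided genericity alone and requires additional care (e.g.\ passing to a two-sided extension or exploiting stronger properties of the system). The observation you end with --- that the density-$0$ exceptional set is genuinely necessary, already for $r=1$, $p_1(x)=x$ --- is correct and is the right sanity check.
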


In a previous paper \cite{Konieczny2016a}, the author investigated the question of whether certain sets of polynomial recurrence are bases of positive integers. A representative instance is the following question.

\begin{question}\label{quest:1}
	Fix $\a \in \RR \setminus \QQ$ and a polynomial $p(x) \in \ZZ[x]$ with $p(n) \to \infty$ as $n \to \infty$. For $\e > 0$, let $\cA$ be the Bohr set $\set{ n \in \NN }{ n \a \bmod {1} \in (- \e, \e) }$. Is it the case that for all $\e > 0$, the set $\set{n \in \NN}{p(n) \in \cA}$ is a basis of order $2$ for the positive integers? 
\end{question}

Here, a set $\cB \subset \NN$ is a basis of order $2$ if there exists $N_0 = N_0(\cB)$ such that for $N \geq N_0$, there are $n_1, n_2 \in \cB$ with $N = n_1 + n_2$. Hence, we are asking if, for sufficiently large $N$, there is a solution to $N = n_1 + n_2$ with $p(n_1) \in \cA, \ p(n_2) \in \cA$.

The answer to Question \ref{quest:1} is (trivially) negative in the case when $\deg p = 1$. Somewhat surprisingly, the answer remains negative when $\deg p = 2$ for generic choice of $\a$. Finally, when $\deg p \geq 3$, the answer is positive, again for a generic choice of $\a$. For exact statements, we refer to \cite{Konieczny2016a}. 

This paper arose from an attempt to see what happens at the other extreme, where instead of being structured, the set $\cA$ is weakly-mixing.

When $\deg p = 1$, then it is not a significant loss of generality to assume that $p(x) = x$. The question then becomes: Is any weakly-mixing set $\cA$ a basis of order $2$? 
The answer to this is negative, but $\cA$ is \emph{almost} a basis of order $2$, in the sense  $\cA+\cA$ has density $1$ (see \cite{Fish2007a}). In the case when $\deg p \geq 2$, one cannot expect to guarantee that a weakly-mixing set $\cA$ contains \emph{any} elements from $\set{p(n)}{n \in \NN}$. Indeed, if $\cA$ is weakly-mixing, then so is $\cA \setminus \mathcal{Z}$ for any $\mathcal{Z}$ of density $0$, and thus in particular $\cA \setminus \set{p(n)}{n \in \NN}$ is weakly mixing. However, we are able to prove the following.

\begin{theorem}\label{thm:main}
	Let $\cA \subset \NN$ be a weakly-mixing set, and let $p(x)\in \ZZ[x]$ be a non-constant polynomial. Then, all $N \in \NN$ except for a set of $0$ density
	can be represented as $N = n_1 + n_2$, where  $n_1,n_2 \in \NN$ are such that $p(n_1) + m \in \cA$ and $p(n_2) + m \in \cA$ for some $m \in \cA$. Moreover, if $\deg p$ is odd then the same conclusion holds for all but finitely many $N$.
\end{theorem}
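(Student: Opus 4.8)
The plan is to reduce Theorem~\ref{thm:main} to an application of Fish's polynomial recurrence result (Theorem~\ref{thm:Fish-2}) together with a circle-method / density argument for the decomposition $N = n_1 + n_2$. Fix $N$ and consider the family of two polynomials $p_1(x) = p(x)$ and $p_2(x) = p(N-x)$ in the variable $x$, thinking of $x = n_1$ and $N - x = n_2$. If $\deg p = d$, then for $N$ in a suitable density-$1$ set these two polynomials have equal degree $d$, leading coefficients whose difference has degree $\geq 1$ unless $d$ is even (more on this below), and both tend to infinity along the relevant range of $n_1$. The idea is then: apply Theorem~\ref{thm:Fish-2} with the weakly-mixing set $\cA$, with the second set $\cB := \cA$ (which, being weakly mixing, has positive density), and with the polynomial pair $p_1, p_2$ evaluated at $n_1$ ranging over a long interval $[\eta N, (1-\eta) N]$; this would produce, for each admissible $n_1$, some $m \in \cA$ with $p(n_1) - m \in \cA$ and $p(N - n_1) - m \in \cA$. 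Replacing $m$ by $-m$ (or adjusting signs/shifting $\cA$, which preserves weak mixing) converts $p(n_i) - m$ into $p(n_i) + m$ as in the statement.

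The first issue is that Theorem~\ref{thm:Fish-2} guarantees recurrence only for $n$ outside a density-$0$ set, and here the ``variable'' is $n_1$, not $N$; I need the exceptional set in the $n_1$-variable to be small enough that, for all but a density-$0$ set of $N$, at least one $n_1 \in [\eta N, (1-\eta)N]$ with $n_1 + n_2 = N$ avoids it. This requires a quantitative or uniform version of Fish's argument: I would either re-run the ergodic-theoretic proof of Theorem~\ref{thm:Fish-2} keeping track of the dependence on $N$ (the system $\fX_\cA$ is fixed, only the polynomials vary, and they vary in a controlled one-parameter family), or use a pigeonhole/Cauchy--Schwarz argument showing that the density-$0$ exceptional set for the pair $(p(x), p(N-x))$ cannot contain a positive proportion of each interval $[\eta N, (1-\eta)N]$ for a positive-density set of $N$. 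I expect this uniformity to be the main obstacle, since it is exactly the point where ``weakly mixing'' must be leveraged more strongly than a black-box invocation of Theorem~\ref{thm:Fish-2} allows — concretely, one wants a bound on a multiple correlation $\frac1H \sum_{h} \mathbf{1}_\cA(n) \mathbf{1}_\cA(p(n)-m)\mathbf{1}_\cA(p(N-n)-m)$ that is uniform in $N$, coming from the spectral gap / mixing of $\fX_\cA$.

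The second, genuinely arithmetic issue is the degenerate case where $p_1$ and $p_2$ have $\deg(p_1 - p_2) = 0$, i.e. where the leading terms of $p(x)$ and $p(N-x)$ cancel only up to lower order — this happens precisely when $d = \deg p$ is even, since then the leading coefficient of $p(N-x)$ is $+a_d$, the same as that of $p(x)$, so $\deg(p(x) - p(N-x)) = d - 1 \geq 1$ is still fine; the real trouble is rather the opposite, that when $d$ is even the two polynomials $p(x)$ and $p(N-x)$ are ``too close'' in a different sense (their difference has odd degree $d-1$, whose leading coefficient $\sim -d a_d N$ grows with $N$), and the recurrence estimate degrades as $N \to \infty$. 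When $d$ is odd, the leading coefficient of $p(N-x)$ is $-a_d$, so $p_1 + p_2$ has degree $\leq d-1$ while $p_1 - p_2 = 2p(x) + (\text{lower order in } x, \text{ polynomial in }N)$ has degree $d$ with \emph{fixed} leading coefficient $2a_d$ independent of $N$; this rigidity is what removes the exceptional set entirely and upgrades ``density $1$'' to ``all but finitely many'', matching the claimed dichotomy. So the final step is to isolate the odd-degree case, verify that the exceptional-set estimate above becomes vacuous (a uniform-in-$N$ recurrence bound with no loss), and conclude that every sufficiently large $N$ admits the desired representation; in the even-degree (or general) case one settles for the density-$1$ conclusion, absorbing the $N$-dependent loss into a set of $N$'s of density $0$.
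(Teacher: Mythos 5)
Your high-level plan is the right one and matches the paper's: pass to the pair $p_1(x)=p(x)$, $p_2(x)=p(N-x)$ so that $N=n_1+n_2$ collapses to a single variable, and observe that the parity of $\deg p$ controls whether the leading data of $(p_1,p_2,p_1-p_2)$ depends on $N$. That observation is exactly the paper's distinction between a \emph{uniformly admissible} family (odd degree, Theorem~\ref{thm:main-uniform}, all but finitely many $N$) and an \emph{admissible polynomial family} (general degree, Theorem~\ref{thm:main-polynomial}, density-one set of $N$). A small slip: for $d$ even the leading coefficient of $p(x)-p(N-x)$ is $+\,d\,a_d\,N$, not $-\,d\,a_d\,N$; this does not affect anything.

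There are, however, two genuine gaps. First, you cannot invoke Theorem~\ref{thm:Fish-2} as a black box: it produces $m\in\cB$ with $p_i(n)-m\in\cA$, while the statement requires $m\in\cA$ with $p(n_i)+m\in\cA$. With $\cA,\cB\subset\NN$, ``replace $m$ by $-m$'' is not available, and translating $\cA$ by a constant does not change the sign of the shift by $m$; the two configurations $\{m,\,p_1(n)-m,\,p_2(n)-m\}\subset\cA$ and $\{m,\,p_1(n)+m,\,p_2(n)+m\}\subset\cA$ are different. You would need to re-derive a $+m$ form of the recurrence, which the paper in effect does by working directly with the correlation $\EE_m\,1_{\cA}(m)\prod_i 1_{\cA}\bigl(p_i^{(N)}(n)+m\bigr)$ via the shift $S$ on $\fX_\cA$. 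Second, and decisively, the step you flag as ``the main obstacle'' — a version of polynomial recurrence whose error is uniform over the $N$-indexed family $(p(x),p(N-x))$ — is not a detail to be absorbed by ``re-running Fish's proof keeping track of $N$''; it \emph{is} the content of the paper. Fish's exceptional set lives in the $n$-variable and a priori depends on $N$, and for a single $N$ a density-zero subset of $\NN$ can swallow the entire window $[\eta N,(1-\eta)N]$, so no amount of pigeonholing on the Theorem~\ref{thm:Fish-2} statement alone will do. The paper resolves this with Theorems~\ref{ergo::thm:PET-uniform-1} and \ref{ergo::thm:PET-uniform-2} (uniform PET induction with the uniform van der Corput Lemma~\ref{ergo::lem:vdCorput}, plus a spectral-measure argument for the density-one version), and then deduces Theorem~\ref{thm:main} by Cauchy--Schwarz and the ergodic theorem as in Section~\ref{sec:Uniform}. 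Your proposal stops precisely where that work begins.
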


The above theorem is a direct consequence of two more technical results, which may be of independent interest. To formulate them, we need to introduce some terminology. For a polynomial $p$, we denote by $\deg p$ and $\lc p$ the degree and leading coefficient of $p$, respectively, so that the leading term of $p$ is $x^{\deg p} \lc p$. We shall say that a family of polynomials $p_i^{(N)}(x) \in \ZZ[x]$ for $i \in [r],\ N \in \NN$, is \emph{uniformly admissible} if the following conditions hold:
\begin{enumerate}
	\item For each $i$ and $N$, $\deg p_i^{(N)} > 0$, and $\lc p_i^{(N)},\ \deg p_i^{(N)}$ do not depend on $N$. 
	\item For each $i \neq j$, $\deg (p_i^{(N)} - p_j^{(N)}) > 0$, and $\lc (p_i^{(N)} - p_j^{(N)}),\ \deg (p_i^{(N)} - p_j^{(N)})$ do not depend on $N$.
\end{enumerate}

For instance, the pair $p_1^{(N)}(n) = n^3,\ p_2^{(N)}(n) = (N-n)^3$ is uniformly admissible, but the pair  $p_1^{(N)}(n) = n^2,\ p_2^{(N)}(n) = (N-n)^2$ is not. 

\begin{theorem}\label{thm:main-uniform}
	Let $\cA \subset \NN$ be a weakly-mixing set, and let $\cB \subset \NN$ be a set of positive density. Let $p_i^{(N)}(x) \in \ZZ[x]$ for $i \in [r],\ N \in \NN$, be a family of polynomials which is \emph{uniformly admissible}. Then, there exists $N_0$ such that for any $N > N_0$, there are $n \in [N]$ and $m \in \cB$ such that $p_1^{(N)}(n) + m, \dots, p_r^{(N)}(n) + m \in \cA$. 
\end{theorem}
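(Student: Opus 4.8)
The plan is to translate the combinatorial statement into the language of ergodic theory, exploiting the definition of a weakly-mixing set $\cA$ as the set of return times of a generic point $x_0$ to a clopen neighbourhood $U$ of itself in a weakly-mixing system $(\fX_\cA, T, \mu)$, and then to prove a multiple recurrence statement with polynomial iterates $T^{p_i^{(N)}(n)}$ uniformly in $N$. Concretely, writing $\mathbf 1_U$ for the indicator of $U$ and $f$ for the indicator (or a suitable cutoff) of the set $\cB$ pushed into a coordinate, the goal reduces to showing that for all large $N$ there exist $n \le N$ such that the point $y$ (the generic point in the product of $\fX_\cA$ with a system modelling $\cB$) satisfies $\mathbf 1_U(T^{p_1^{(N)}(n)+m}y_0)\cdots \mathbf 1_U(T^{p_r^{(N)}(n)+m}y_0)\mathbf 1_{\cB}(m) = 1$ for some $m$; by genericity, this follows once the corresponding ergodic average over $n\le N$ and $m$ is bounded below by a positive constant \emph{independent of $N$}, for all $N$ large. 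So the first step is to set up the correspondence principle carefully, reducing Theorem~\ref{thm:main-uniform} to a lower bound, uniform in $N$, for averages of the form $\frac1N\sum_{n\le N}\int \prod_{i=1}^r T^{p_i^{(N)}(n)}g \cdot h\, d\mu$ where $g = \mathbf 1_U$ and $h$ encodes membership in $\cB$ together with the $m$-shift.

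The second and main step is a uniform polynomial ergodic theorem. For a \emph{fixed} family of polynomials, the Bergelson--Leibman--type machinery (PET induction plus the van der Corput lemma) shows that in a weakly-mixing system the average $\frac1N\sum_{n\le N}\prod_i T^{p_i(n)}g$ converges in $L^2$ to the constant $\prod_i \int g\,d\mu$, provided the $p_i$ and the $p_i - p_j$ all have positive degree — this is essentially the content behind Theorem~\ref{thm:Fish-2}. The issue here is that the polynomials $p_i^{(N)}$ vary with $N$, so I cannot simply quote a convergence statement. However, the hypothesis of \emph{uniform admissibility} is exactly what is needed to make the PET induction uniform: the degrees of all the $p_i^{(N)}$ and of all their pairwise differences, as well as the relevant leading coefficients, are fixed, so the PET ``weight vector'' and the number of van der Corput steps are bounded independently of $N$, and at each step the error terms one picks up are controlled by Weyl-type estimates that depend only on these fixed data (and on the weak-mixing of $\mu$), not on the lower-order coefficients of the $p_i^{(N)}$. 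Carrying out the PET induction while tracking that every bound is $N$-uniform is the technical heart of the argument, and I expect the bookkeeping — verifying that taking differences $p_i^{(N)}(n+h) - p_j^{(N)}(n)$ preserves uniform admissibility of the derived families, and that the induction terminates after a bounded number of steps — to be the main obstacle.

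The third step handles the extra averaging variable $m \in \cB$ and the auxiliary system. One option is to incorporate $m$ into the dynamics by passing to a product system and using that $\cB$ has positive density, so that a positive-density set of shifts $m$ contributes; alternatively, since the main average is a \emph{constant} (after the uniform polynomial ergodic theorem) plus a small error, one fixes any single $m$ for which the shifted point is still generic for the product and for which $m\in\cB$ — positivity of the density of $\cB$ guarantees infinitely many such $m$, and one checks that the uniform lower bound survives the shift by $m$ because $T^m$ is measure-preserving. Either way, the conclusion is: there is $c > 0$ and $N_0$ such that for $N > N_0$ the count of pairs $(n,m)$ with $n \le N$, $m \in \cB$, and $p_i^{(N)}(n) + m \in \cA$ for all $i$ is at least $cN$, in particular nonempty. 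Finally, Theorem~\ref{thm:main} follows from Theorem~\ref{thm:main-uniform} by taking $r = 2$, $p_1^{(N)}(n) = p(n)$, $p_2^{(N)}(n) = p(N-n)$ (adjusting signs and leading coefficients so that $\lc p_2^{(N)} > 0$, which forces the parity condition on $\deg p$ in the ``all but finitely many'' clause) and $\cB = \NN$; when $\deg p$ is even one instead uses $p_2^{(N)}(n) = p(N) - p(n) + (\text{correction})$, or argues along a density-one set of $N$, which is where the weaker conclusion for even degree originates.
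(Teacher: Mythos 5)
Your overall strategy matches the paper's: reduce the combinatorial statement to a uniform polynomial ergodic theorem (a version of Bergelson's PET result where the polynomials may vary but keep fixed degrees and leading coefficients, and the conclusion holds uniformly), and prove that uniform theorem by running the PET induction together with a van der Corput argument while tracking that the bounds are uniform. You have correctly identified that uniform admissibility is precisely the hypothesis that keeps the PET weight vector and the number of van der Corput steps bounded independently of $N$.

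The genuine gap is in your handling of the auxiliary variable $m$. Your alternative of ``fixing any single $m \in \cB$ for which the shifted point is still generic'' does not work: the uniform PET theorem gives convergence of $\EE_{n \le N} \prod_i T^{p_i^{(N)}(n)} f_0$ to the constant $d(\cA)^r$ only in $L^2(\mu_{\cA})$, and $L^2$-convergence carries no pointwise information at the particular point $S^m 1_{\cA}$. So you cannot conclude that the average $\EE_{n\le N}\prod_i 1_{\cA}(p_i^{(N)}(n)+m)$ is positive for any fixed $m$. The paper instead averages over $m\le M$ and lets $M\to\infty$; genericity of $1_{\cA}$ for $\mu_{\cA}$ turns this $m$-average into an integral over $X_{\cA}$, and a Cauchy--Schwarz in $m$ separates off $1_{\cB}$ (leaving only a factor $d(\cB)^{1/2}$) and produces exactly the $L^2(\mu_{\cA})$-norm of the deviation, which the uniform PET theorem controls. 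This is the step that bridges from the $L^2$ statement to the combinatorial count, and it is not optional. Your ``product system'' alternative could be made to work, but as stated it is a plan for a plan, not an argument.

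A smaller inaccuracy: in deducing Theorem~\ref{thm:main} from Theorem~\ref{thm:main-uniform}, the family $p_1^{(N)}(n)=p(n)$, $p_2^{(N)}(n)=p(N-n)$ fails to be uniformly admissible for even $\deg p$ not because of the sign of $\lc p_2^{(N)}$ (which one cannot simply ``adjust'') but because the leading coefficient of $p_1^{(N)}-p_2^{(N)}$ then depends on $N$ (e.g.\ for $p(n)=n^2$ the difference is $2Nn-N^2$). This is why the even-degree case falls under the ``admissible'' (polynomial-in-$N$) regime of Theorem~\ref{thm:main-polynomial}, whose conclusion is only density-one, rather than under Theorem~\ref{thm:main-uniform}.
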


Rather than fully general families of polynomials, we are interested specifically in those which are themselves given by polynomial formulas. In other words, we will consider a sequence $p_i^{(N)}(x) \in \ZZ[x,N]$, $i \in [r]$. We will say that such sequence is \emph{admissible} if the following holds:
\begin{enumerate}
	\item For each $i$, $\deg_x p_i^{(N)}(x) > 0$ (as polynomial in two variables).
	\item For each $i \neq j$, $\deg_x (p_i^{(N)}(x) - p_j^{(N)}(x)) > 0$.
\end{enumerate}
 
For instance, the pair $p_1^{(N)}(n) = n^k,\ p_2^{(N)}(n) = (N-n)^k$ is admissible for each $k \in \NN$.
 
\begin{theorem}\label{thm:main-polynomial}
	Let $\cA \subset \NN$ be a weakly-mixing set, and let $\cB \subset \NN$ be a set of positive density. Let $p_i^{(N)} (x)\in \ZZ[x,N]$ for $i \in [r]$ be a polynomial family of polynomials, which is \emph{admissible}. Then, there exists a set $S \subset \NN$ with density $\densNat(S) = 1$, such that for all $N \in S$, there are $n \in [N]$, $m \in \cB$ such that $p_1^{(N)}(n) + m, \dots, p_r^{(N)}(n) + m \in \cA$. 
\end{theorem}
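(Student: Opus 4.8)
The plan is to recast the statement in dynamical terms and then push through the polynomial multiple‑recurrence argument underlying Theorem~\ref{thm:main-uniform}, with the extra twist that one must also average over $N$: the family $(p_i^{(N)})_i$ is genuinely \emph{not} uniformly admissible and, as the example $p_1^{(N)}(n)=n^2,\ p_2^{(N)}(n)=(N-n)^2$ already shows (here the leading coefficient in $n$ of the difference is $2N$), it cannot be replaced by one that is. Recall that a weakly‑mixing set $\cA$ is a set of return times $\set{n\in\NN}{T^{n}x_0\in U}$ of a generic point $x_0$ in a weakly‑mixing system $(\fX,\mu,T)$ for a fixed $U$ with $\mu(U)>0$, and that genericity lets one replace Ces\`aro averages along the orbit of $x_0$ by integrals against $\mu$. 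The conclusion sought --- for density‑$1$ many $N$ there are $n\le N$ and $m\in\cB$ with $p_i^{(N)}(n)+m\in\cA$ for all $i$ --- is therefore equivalent to the assertion that, for density‑$1$ many $N$, the average
\[
 A_{N,M}\;:=\;\frac1N\sum_{n\le N}\ \frac1M\sum_{m\le M}\mathbf 1_{\cB}(m)\prod_{i=1}^{r}\mathbf 1_{U}\!\bigl(T^{\,p_i^{(N)}(n)+m}x_0\bigr)
\]
is strictly positive for all large $M$. After deleting the finitely many $N$ at which the leading $x$‑coefficient of some $p_i^{(N)}$ or of some difference $p_i^{(N)}-p_j^{(N)}$ --- each a nonzero element of $\ZZ[N]$ --- vanishes, we may assume for the remaining $N$ that the relevant $x$‑degrees equal their generic values and that $p_i^{(N)}(n)\ge0$ for $0\le n\le N$, so that every index occurring above is a nonnegative integer.

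To bound $A_{N,M}$ from below I would first handle the inner sum over $m$. Weak mixing forces the orbit of $x_0$ to be asymptotically independent of the density pattern of $\cB$, so, arguing as in the proof of Theorem~\ref{thm:Fish-2} and using $p_i^{(N)}(n)\to\infty$, one obtains for every $n$ outside a (density‑$0$, possibly $N$‑dependent) set that
\[
 \liminf_{M\to\infty}\ \frac1M\sum_{m\le M}\mathbf 1_{\cB}(m)\prod_{i}\mathbf 1_{U}\!\bigl(T^{\,p_i^{(N)}(n)+m}x_0\bigr)\ \ge\ c_0\,\densBanUp(\cB)\,\mu\Bigl(\bigcap_{i}T^{-p_i^{(N)}(n)}U\Bigr)
\]
for an absolute $c_0>0$. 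It then remains to see that the Ces\`aro average over $n\le N$ of $\mu\bigl(\bigcap_i T^{-p_i^{(N)}(n)}U\bigr)$ is bounded below by essentially $\mu(U)^{r}$; this is a Bergelson‑type polynomial ergodic statement for the weakly‑mixing system $\fX$, and the input it needs is exactly the admissibility hypotheses $\deg_x p_i^{(N)}>0$ and $\deg_x(p_i^{(N)}-p_j^{(N)})>0$, which make the PET/van‑der‑Corput induction run and collapse every ``off‑diagonal'' contribution to zero.

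The price one pays --- and the reason the conclusion concerns only a density‑$1$ set of $N$, not all large $N$ as in Theorem~\ref{thm:main-uniform} --- is the lack of uniformity in $N$: the $p_i^{(N)}$ and their pairwise differences keep their $x$‑degrees, but their coefficients grow with $N$, so the threshold past which the equidistribution underlying the PET induction becomes effective drifts with $N$, and one cannot simply quote the per‑$N$ estimate for every large $N$. The way around this is to prove the estimate in an averaged form: running the PET/van‑der‑Corput machinery in the two variables $n$ and $N$ at once --- regarding $p_i^{(N)}(n)$ as a bivariate polynomial $p_i(n,N)\in\ZZ[n,N]$, whose admissibility again supplies the non‑degeneracy --- one would show that
\[
 \frac1X\sum_{N\le X}\Bigl|\ \frac1N\sum_{n\le N}\mu\Bigl(\bigcap_i T^{-p_i^{(N)}(n)}U\Bigr)-\mu(U)^{r}\ \Bigr|\ \tendsto{X\to\infty}\ 0 ,
\]
so that, via Markov's inequality, the per‑$N$ average exceeds $\tfrac12\mu(U)^{r}$ for all $N$ outside a density‑$0$ set. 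Combined with the control of the $m$‑average from the previous step, this makes $A_{N,M}\gtrsim\densBanUp(\cB)\mu(U)^{r}>0$ for all such $N$; for each of them there is an $n\le N$ contributing to the sum, and unwinding the dynamical dictionary produces the required $n\le N$ and $m\in\cB$. I expect the bulk of the effort to be in organising this two‑variable PET induction and in keeping the weak‑mixing estimates quantitative enough in the coefficients of the $p_i^{(N)}$ that the averaged bound --- rather than merely a positive‑density bound on the good set of $N$ --- actually holds.
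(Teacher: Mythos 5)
Your high-level plan has the right shape: pass to the system $\fX_{\cA}$, reduce to an ergodic averaging statement, and exploit the fact that Ces\`aro convergence of a bounded non-negative sequence is equivalent to convergence along a density-$1$ set (this is precisely the paper's Lemma~\ref{ergo::lem:caesaro-conv-unif}). You have also correctly diagnosed the obstruction: the family $(p_i^{(N)})_i$ is not uniformly admissible because leading coefficients of differences drift with $N$, so the per-$N$ PET estimate is not uniform. Two things differ from the paper, however, and the second is a genuine gap.

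First, a cosmetic difference: for the $m$-average you try to prove a direct lower bound of the form $\liminf_M\, \EE_{m\le M} 1_{\cB}(m)\prod_i 1_U(T^{p_i^{(N)}(n)+m}x_0) \ge c_0\,\densBanUp(\cB)\,\mu\bigl(\bigcap_i T^{-p_i^{(N)}(n)}U\bigr)$, invoking the proof of Theorem~\ref{thm:Fish-2}. The paper avoids asserting any quantitative independence between $\cB$ and the orbit of $1_{\cA}$: it assumes there are no solutions, drops $1_{\cB}$ by Cauchy--Schwartz, and applies the ergodic theorem in $m$, so that only $\norm{\EE_{n\le N}\prod_i S^{p_i^{(N)}(n)}f_0 - \densNat(\cA)^r}_{L^2(\mu)}$ needs to be small. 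That route needs nothing about $\cB$ beyond positive density and produces exactly the quantity the PET theorem controls.

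Second, and more seriously: your resolution of the non-uniformity is a ``two-variable PET in $(n,N)$'' over the triangular region $\{(n,N): n\le N\le X\}$, aiming at
\[
\frac1X\sum_{N\le X}\Bigl|\ \frac1N\sum_{n\le N}\mu\Bigl(\bigcap_i T^{-p_i^{(N)}(n)}U\Bigr)-\mu(U)^{r}\ \Bigr|\ \tendsto{X\to\infty}\ 0.
\]
This is where the argument stops being a proof. A joint PET in $(n,N)$, even granting that the triangular summation region can be handled, controls the double Ces\`aro average $\EE_{N,n}(\cdots)\to\mu(U)^r$; it does \emph{not} control the $L^1$-in-$N$ quantity you wrote, which has an absolute value inside the $N$-average. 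Upgrading to $L^1$ (or $L^2$) in $N$ requires estimating $\EE_N\bigl(\EE_{n\le N}\mu(\cdots)\bigr)^2$, which unwinds to a three-variable average over $(n_1,n_2,N)$ on the product system $X\times X$ with two commuting copies of $T$, over a region that is no longer a product. You do not indicate how to run PET there, and the paper does not do it this way either.

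The paper's actual route (Theorem~\ref{ergo::thm:PET-uniform-2}) keeps PET strictly in the $n$-variable and absorbs the $t$-dependence (in your notation $t=N$) in the \emph{base case}. After the van der Corput reductions one is left with a single linear polynomial $p_1^{(t)}(n)=a(t)n+b(t)$ with $a(t)\in\ZZ[t]$, and the crucial observation is spectral: writing $\norm{\EE_{n\le N}T^{a(t)n}f}^2=\int_{\TT}\abs{\EE_{n\le N}e(a(t)n\alpha)}^2\,d\nu(\alpha)$ with $\nu$ the spectral measure of $f$, weak mixing forces $\nu$ to be atomless, and Weyl's theorem applied to the polynomial sequence $(a(t)\alpha)_t$ shows that $\nu(\Gamma_{a(t),\de})\to\lambda(\Gamma_{a(t),\de})$ for density-$1$ many $t$ (Observation~\ref{ergo::obs-int@thm:PET-uniform-2}); intersecting these sets over a countable family of $\de$'s (the paper's first lemma in Section~\ref{sec:Polynomial}) gives the density-$1$ set $S$ on which convergence is uniform. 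This spectral-measure-plus-Weyl mechanism is the technical heart of the proof and is entirely absent from your proposal; without it, you have no way to pass from ``small on average over $N$'' to ``small for density-$1$ many $N$''.
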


\begin{proof}[Proof of Theorem \ref{thm:main} assuming \ref{thm:main-uniform} and \ref{thm:main-polynomial}]

	In the case when $\deg p$ is odd, the pair $p_1^{(N)}(x) = p(x),\ p_2^{(N)}(x) = p(N-x)$ is uniformly admissible, hence by Theorem \ref{thm:main-uniform} applied with $\cB = \cA$, for all sufficiently large $N$ there exist $n,m \in \cA$ such that $p(n)+m \in \cA,\ p(N-n)+m \in \cA$. It remains to put $n_1 = n,\ n_2 = N-n$.
	
	In the case when $\deg p$ is even, we apply Theorem \ref{thm:main-polynomial}, and use the fact that the pair $p_1^{(N)}(x), p_2^{(N)}(x)$ just defined (but now viewed as  an element of $\ZZ[x,N]$) is admissible. The remainder of the argument is fully analogous.
\end{proof}

\begin{remark}
	Our proof of Theorem  \ref{thm:main} depends on the parity of $\deg p$ in a crucial way. However, it is not a priori clear that the conclusion of this theorem should depend on $\deg p$. In fact, the author believes that the stronger conclusion (the set of exceptional $N$ being finite) holds also when $\deg p$ is even, but it does not appear to be possible to obtain this result with our methods.
\end{remark}

This paper is organised as follows. In Section \ref{sec:Definitions} we give basic definitions, specifically we define the weakly-mixing sets. In Section \ref{sec:Uniform} we reduce Theorem \ref{thm:main-uniform} to a uniform convergence statement in ergodic theory, and prove a special case of it. In Section \ref{sec:PET} we introduce the PET induction and finish the proof of Theorem \ref{thm:main-uniform}. In Section \ref{sec:Polynomial} we again reduce Theorem \ref{thm:main-polynomial} to a statement about certain ergodic averages, and then prove this statement. Finally, in Section \ref{sec:End} we prove a stronger version of some technical results from \ref{sec:Polynomial}.

This paper draws heavily on the work of Bergelson \cite{Bergelson1987} and Fish \cite{Fish2007a}, \cite{Fish2007b}. Many of the ideas we use can be traced back to their, or earlier, work.

\subsection*{Acknowledgements}

The author thanks Vitaly Bergelson for useful comments on Theorem \ref{ergo::thm:PET-uniform-1}, and Ben Green for advice and support during the work on this project. The author is also grateful to the anonymous referee for noticing an error in an earlier version of this paper. 
Finally, thanks go to Sean Eberhard, Freddie Manners, Rudi Mrazovi\'{c}, Przemek Mazur and Aled Walker for many informal discussions. 
\section{Definitions, convenitions and basics}\label{sec:Definitions}

	Throughout the paper, we denote the characteristic function of a set $X$ by $1_X$. We use the convention $\NN = \{1,2,3, \dots\}$, $\NN_0 = \NN \cup \{0\}$, and $[n] = \{1,2,\dots,n\}$. To simplify notation, we use the symbol $\EE$ borrowed from probability to denote averages: $\EE_{x \in X} f(x) = \frac{1}{\abs{X}} \sum_{x \in X} f(x)$. 

For a set of integers $\cA \subset \NN$ we define its density as $\densNat(\cA) = \lim_{N \to \infty} 
\EE_{n \in [N]} 1_{\cA}(n)$, provided that the limit exists, which will usually be the case in this paper. Upper and lower densities are defined accordingly.

	We shall use standard asymptotic notation. We write $X = O(Y)$ or $X \ll Y$ if $X \leq C Y$ for an absolute constant $C > 0$. If $C$ is allowed to depend on a parameter $M$, we write $X = O_M(Y)$. In presence of a variable $n$, we write $X = o_{n \to \infty} (Y)$ (or simply $X = o(Y)$ if no confusion is possible) if $X/Y \to 0$ as $n \to \infty$. If the rate of convergence is allowed to depend on $M$, we write $X = o_{M;n \to \infty}(Y)$.

\newcommand{\sC}{\mathscr{C}}
A \emph{measure preserving system} $\fX = (X,T,\Borel, \mu)$ consists of a compact metrizable space $X$, together with a probability measure $\mu$ on a Borel $\sigma$-algebra $\cB$, and a $\Borel$-measurable transformation $T \colon X \to X$, such that $\mu(T^{-1}E) = \mu(E)$ for all $E \in \Borel$. The transformation $T$ acts on function on $X$ by composition: $(Tf)(x) = f(Tx)$.

Recall that a m.p.s.\ $\fX$ is \emph{ergodic} if for any $A,B \in \cB$ we have that $\EE_{n \leq N} \mu(A \cap T^{-n}B) \xrightarrow[N \to \infty]{} \mu(A) \mu( B)$, and similarly $\fX$ is 
 \emph{weakly mixing} if we have the stronger condition
$\EE_{n \leq N} \abs{ \mu(A \cap T^{-n}B) - \mu(A)\mu(B) } \xrightarrow[N \to \infty]{}~0.$ A point $x \in X$ is \emph{generic} if for any $f \in C(X)$ one has $\EE_{n \leq N} T^n f(x) \to \int f d\mu$. It is a consequence of the ergodic theorem that $\mu$-almost all points are generic.

 A morphism between m.p.s.'s $\fX = (X,\Borel, \mu, T)$ and $\fY = (Y,\sC, \nu, S)$ is a $(\Borel, \sC)$-measurable map $\pi \colon X \to Y$ such that $\pi \circ T = S \circ \pi$ and $\pi_* \mu = \nu$. In this context, $\fY$ is a \emph{factor} and $\fX$ is an \emph{extension}. Any factor $\fY$ of $\fX$ is uniquely determined, up to isomorphism, by the $\sigma$-algebra $\Borel' \subset \Borel$ generated by the sets $\pi^{-1}(F)$, $F \in \sC$. In particular, we have the conditional expectation operation $\EE(\cdot|\fY)$, which we can view (with obvious identifications) as mapping $L^p(\mu)$ to a subspace of $L^p(\mu)$, $p \in [1,\infty]$.

It will be convenient to view a set $\cA \subset \NN_0$ of positive upper density as arising from dynamics. Let $\Omega := \{0,1\}^{\NN_0}$ denote the \emph{shift space}, taken with the natural product topology and the Borel $\sigma$-algebra. On $\Omega$, we may define the \emph{shift map} given by $(Sx)(i) := x(i+1)$. To $\cA \subset \NN_0$ we can always associate its characteristic function $1_{\cA} \in \Omega$, which gives rise the \emph{subshift} $X_\cA := \mathrm{cl} \set{S^n 1_{\cA}}{ n \in \NN_0}$, which is evidently a closed and $S$-invariant subspace of $\Omega$. 

\begin{definition}
	A set $\cA \subset \NN_0$ with positive upper density is \emph{weakly-mixing} if and only if the point $1_{\cA} \in X_{\cA}$ is generic for some ergodic $S$-invariant probability measure $\mu_{\cA}$ (which is necessarily unique), such that the resulting measure preserving system denoted $\fX_{\cA} = (X_\cA, S, \Borel(X_\cA), \mu_{\cA})$ is weakly mixing.
\end{definition}

 We stress that a weakly mixing set $\cA$ is in particular required to have positive upper density, and it has a density since $\EE_{n\leq N} 1_\cA(n) \xrightarrow[N\to\infty]{} \mu(\set{x \in \Omega}{x(0)=1)}$. A seemingly more general definition of weakly mixing systems is possible.

\begin{observation}\label{obs:def-of-WM-set-general}
	A set $\cA \subset \NN_0$ is weakly mixing if and only if it takes the form $\cA = \set{n \in \NN_0}{ f(T^n x_0) = 1}$, where $\fX = (X,T,\Borel,\mu)$ is a weakly mixing system, $f \in L^\infty(\mu)$ takes values $0$ and $1$, $\int f d\mu > 0$, and $x_0$ is $f$-\emph{generic}. Here, a point $x_0$ is $f$-generic if for any $g$ in the algebra generated by $f,\ Tf,\ T^2f, \dots$, we have the convergence of the averages:
$$
	\EE_{n \leq N} g( T^{n} x_0) \xrightarrow[N \to \infty]{} \int_{X} g d\mu. 
$$ 
\end{observation}
\begin{proof} 
Clearly, any weakly mixing system is of the aforementioned form, with $\fX = \fX_\cA$, so only one implication needs to be proved. Suppose that a set $\cA = \set{n \in \NN}{ f(T^n x_0) = 1}$ is as in the latter definition. Define the measurable map $F\colon X \to \Omega$  given by $x \mapsto ( f(T^n x) )_{n \in \NN_0}$. It is clear that $F \circ T = S \circ F$, and hence the pushforward $\nu := F_* \mu$ is a $S$-invariant measure on $\Omega$. Since $\fY = (\Omega, S, \Borel(\Omega), \nu)$ is a factor of $X$, it is easy to check that $\fY$ is weakly mixing. Note also that $1_{\cA} = F(x_0)$.

It remains to check that $1_{\cA}$ is generic for thus defined $\nu$. It will suffice to verify that for any cylinder $U = \set{x \in \Omega}{ x(0) = \epsilon_0, \dots, x(r) = \epsilon_r }$ it is the case that $\EE_{n \leq N} 1_U( S^n 1_{\cA} ) \to \nu(U)$. But this is an easy consequence of $f$-genericity of $x_0$. Indeed, let $f_i(x) = f(x)$ if $\epsilon_i = 1$ and $f_i(x) = 1-f(x)$ if $\epsilon_i = 0$; then
\begin{align*}
	\EE_{n \leq N} 1_U (S^n 1_{\cA}) = \EE_{n \leq N} \prod_{i \leq r} f_i( T^{n+i} x_0) \xrightarrow[N \to \infty]{} \int_{X} \prod_{i \leq r} T^i f_i d\mu = \nu(U). \qquad  \qedhere
\end{align*}
\end{proof}
We close with a remark on invertible extensions. A m.p.s.'s $\fX = (X,T,\Borel,\mu)$ is \emph{invertible} if $T$ is invertible. Any m.p.s.\ $\fX$ has a canonical invertible extension $\tilde \fX$, and the invertible extension of a weakly mixing system is again weakly mixing. Provided that $T$ is continuous and surjective (as in the case for $\fX_\cA$ mentioned above), we may ensure that if $x \in \fX$ is generic, and $\tilde x$ is a lift of $x$, then $\tilde x$ is generic as well. Hence, for any weakly mixing set $\cA$, we may assume that it originates from an invertible weakly mixing system via the construction in Observation \ref{obs:def-of-WM-set-general} (in simpler terms, we may relate $\cA$ to a two-sided shift rather than the one-sided one used in Definition \ref{obs:def-of-WM-set-general}). Most of the time, we assume invertibitily for the sake of convenience, but our results on ergodic averages, such as Theorem \ref{ergo::thm:PET-uniform-1} and \ref{ergo::thm:PET-uniform-2} remain true for non-invertible systems with minor modifications.  
\renewcommand{\tt}{^{(t)}}
\newcommand{\nn}{^{(N)}}
\renewcommand{\d}{\ \mathrm{d}}

\section{Uniform ergodic theorem}\label{sec:Uniform}

We will now explain how Theorem \ref{thm:main-uniform} can be derived from a result in ergodic theory, concerning convergence of certain averages. Because the set $\cA$ is already related to a m.p.s.\ $\fX_{\cA} = (X_\cA, S, \mu_{\cA})$, it comes as no surprise that we will be interested in averages of functions for this system. 

Fix a family of polynomials $p_{i}\nn$, $i \in [r]$ as in Theorem \ref{thm:main-uniform} or \ref{thm:main-polynomial}, and assume for simplicity that $\cB = \cA$. For large integers $M,N$, let $\cN(N,M)$ denote the number of solutions to  
\begin{equation}
	p_1\nn(n) + m \in \cA, \quad \dots, \quad p_r\nn(n) + m \in \cA, \quad m \in \cA 
	\label{eq:001}
\end{equation}
with $n \in [N],\ m \in [M]$. Let also $f_0 \in C(X_{\cA})$ be the function given by $f_0(x) = x(0)$. We may then approximate, at least heuristically:
\begin{align*}
	\frac{1}{NM}\cN(N,M) &= \EE_{m \leq M} \EE_{n \leq N} 1_{\cA}(m) \prod_{i=1}^r 1_{\cA}( p_i\nn(n) + m )  
	\\& = \EE_{m \leq M} \EE_{n \leq N} S^m f_0(1_\cA)  \prod_{i=1}^r S^{p_i\nn(n) + m} f_0( 1_{\cA} )
	\\& \overset{(1)}\approx \int_{X_{\cA}} f_0 \cdot \EE_{n \leq N}  \prod_{i=1}^r S^{p_i\nn(n)} f_0 \d \mu_{\cA} 
	\\&
	 \overset{(2)}\approx \left( \int_{X_{\cA}} f_0(x) \d \mu_{\cA} \right)^{r+1} = \densNat(\cA)^{r+1}.
\end{align*}

The approximation labelled $(1)$ is simply the ergodic theorem, and is valid as long as $M$ is sufficiently large, with $N$ fixed. The key difficulty lies in making precise and justifying step (2), which will involve understanding the convergence of averages such as the one under the integral.

Study of similar averages was pioneered by Bergelson in \cite{Bergelson1987}, but without the dependence of the polynomials on $N$. We shall call a sequence of $r \geq 1$ polynomials $(p_i)_{i=1}^r$ \emph{admissible} if none of $p_i$ and $p_i - p_j$ with $i \neq j$ are constant.
\begin{theorem}[Bergelson \cite{Bergelson1987}]\label{ergo::thm:PET-Bergelson}
	Suppose that a m.p.s.\ $\fX = (X,T,\Borel,\mu)$ is weakly mixing and invertible. 
	Let $(p_i)_{i=1}^r$ be an admissible sequence of polynomials. Let $f_i \in L^\infty(\mu)$ for $i \in [r]$. Then:
	\begin{equation}
	\EE_{n \leq N} \prod_{i=1}^r T^{p_i(n)} f_i \xrightarrow[N \to \infty]{L^2} \prod_{i=1}^r \int f_i d\mu 
	. 
	\end{equation}
\end{theorem}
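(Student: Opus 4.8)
The plan is to prove Theorem~\ref{ergo::thm:PET-Bergelson} by \emph{PET induction} (Polynomial Exhaustion Technique), following Bergelson. The key reduction step uses the van der Corput lemma in Hilbert space: if $u_n$ is a bounded sequence in $L^2(\mu)$ with $\EE_{n \le N}\EE_{h \le H}\spr{u_{n+h}}{u_n} \to 0$ as $N \to \infty$ then $H \to \infty$ (or, quantitatively, the $\limsup$ over $N$ is small for large $H$), then $\EE_{n \le N} u_n \to 0$ in $L^2$. Applying this with $u_n = \prod_{i=1}^r T^{p_i(n)} f_i - \prod_i \int f_i$, after subtracting the expected limit so that WLOG $\int f_i\, d\mu = 0$ for at least one $i$, reduces the average of a length-$r$ polynomial family at shift $n$ to an average over $n$ and $h$ of a new product, where each factor becomes $T^{p_i(n+h) - p_j(n)} (\dots)$ for a fixed ``pivot'' index $j$.

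The heart of the argument is the bookkeeping: one assigns to each finite admissible family of polynomials a \emph{weight} (roughly, the pair consisting of the maximal degree $d$ and the number of distinct polynomials of degree $d$, ordered lexicographically or by some well-ordering of $\NN^2$), and shows that the van der Corput step strictly decreases this weight. Concretely, I would first normalize: since the conclusion is linear in each $f_i$ and the claimed limit is the ``expected'' one, it suffices to show $\EE_{n \le N} \prod_i T^{p_i(n)} f_i \to 0$ whenever $\int f_{i_0}\, d\mu = 0$ for some $i_0$. Pick a polynomial $p_j$ of minimal degree among those; replacing $n$ by $n+h$ and using van der Corput, the relevant inner average involves the family $\{p_i(x+h) - p_j(x) : i \ne j\} \cup \{p_i(x+h) - p_i(x)\}_i$ (indexed appropriately, with $h$ a parameter). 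One checks this new family, for each fixed large $h$, is again admissible and has strictly smaller weight: differences $p_i(x+h)-p_j(x)$ with $i\ne j$ still have positive degree (the leading terms of $p_i,p_j$ either differ in degree or, if equal in degree, differ because $\deg(p_i-p_j)>0$, and the $h$-shift does not affect top-degree behavior), while the diagonal terms $p_i(x+h)-p_i(x)$ have degree $\deg p_i - 1$, strictly smaller; crucially the number of top-degree polynomials drops because the pivot has been ``used up.'' The base case is a single polynomial of degree $1$, i.e. $\EE_{n\le N} T^{an+b} f \to \int f$ for $a \ne 0$, which is the classical mean ergodic theorem along an arithmetic progression combined with weak mixing (the spectral measure of $f - \int f$ has no atoms, so the Wiener--Wintner / spectral argument gives $\EE_{n \le N} e(an\theta) \to 0$ for a.e. $\theta$ with respect to that measure, hence $L^2$-convergence to $0$).

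There are two technical points requiring care. First, the van der Corput estimate produces an average over $h$ of $L^2$-norms of the inner averages, so one must run the induction in a form that tracks uniformity in $h$: the inductive hypothesis should assert $\limsup_{N\to\infty}\norm{\EE_{n\le N}\prod_i T^{q_i^{(h)}(n)} g_i}_{L^2}$ is bounded by something tending to $0$ as $h \to \infty$, or more simply one proves the clean statement for each fixed family and then notes that for every fixed $h$ outside a thin set the shifted family is admissible of lower weight, so the inner average tends to $0$; dominated convergence in $h$ then closes the van der Corput bound. Second, one must handle the ``pivot selection'' so that after subtracting constants the factor with zero integral is not destroyed — this is why one subtracts $\prod_i \int f_i$ first and argues by a telescoping/multilinearity reduction to the case of a genuine mean-zero factor, then chooses that factor's polynomial (or another of minimal degree) as pivot.

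The main obstacle, as usual with PET, is not any single estimate but getting the induction scheme and the weight function exactly right so that \emph{every} van der Corput step is a genuine decrease and the shifted families remain admissible; in particular one must verify that the new diagonal polynomials $p_i(x+h)-p_i(x)$ which could be constant (if $\deg p_i = 1$) are simply dropped from the family without affecting admissibility, and that $h$-dependence never interferes with the degree and leading-coefficient comparisons. Once the scheme is set up, each step is the routine van der Corput computation plus an elementary degree count, and the base case is standard spectral theory for weakly mixing systems.
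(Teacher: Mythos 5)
This theorem is cited from Bergelson's 1987 paper and is not proved in the present text; the closest thing to a proof here is the paper's argument for the uniform version, Theorem~\ref{ergo::thm:PET-uniform-1}, which specializes to Bergelson's statement by taking a single-element index set. Your outline captures the correct strategy at that level --- van der Corput, normalization to mean-zero factors, induction on a weight of the polynomial family, spectral base case --- but there are two concrete gaps.

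First, the post--van der Corput polynomial family is misdescribed. Translating $\EE_{n\le N}\spr{u_n}{u_{n+h}}$ by $T^{-p_j(n)}$ for a pivot $p_j$ of minimal degree yields the $2r-1$ polynomials
\[
\{\,p_i - p_j : i \ne j\,\}\ \cup\ \{\,p_i(\cdot+h) - p_j : i \in [r]\,\},
\]
which is exactly what the paper uses (see \eqref{ergo::eq:def-p-1hi}--\eqref{ergo::eq:def-p-2hi}). Your proposed family $\{p_i(\cdot+h)-p_j : i\ne j\}\cup\{p_i(\cdot+h)-p_i : i\}$ contains $r$ diagonal terms $p_i(\cdot+h)-p_i$, one for every index; only the pivot contributes a diagonal term, because the reduction comes from a single measure-preserving conjugation $T^{-p_j(n)}$ applied to the whole integral, not from pairing each $f_i$ with its own shift. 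Also the unshifted differences $p_i - p_j$ with $i\ne j$ are missing from your list; they are essential to the degree bookkeeping.

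Second, and more seriously, the proposed weight --- essentially the pair (maximal degree, count of top-degree polynomials or leading coefficients) --- does not strictly decrease under the van der Corput step, so the induction does not close. Take $p_1 = x^3$, $p_2 = x^3+x^2$, $p_3 = x^2$ and pivot $p_3$. The reduced family is
\[
x^3-x^2,\quad x^3,\quad (x+h)^3-x^2,\quad (x+h)^3+(x+h)^2-x^2,\quad (x+h)^2-x^2,
\]
i.e.\ four degree-$3$ polynomials all with leading coefficient $1$ plus one linear polynomial. Under your weight both the original and the reduced family read $(3,1)$ (or $(3,2)\to(3,4)$ if ``distinct polynomials'' is taken literally --- even worse). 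What does decrease is the full \emph{characteristic vector} $\charX{p}$ whose $k$-th entry counts distinct leading coefficients among the degree-$k$ polynomials, ordered reverse-lexicographically: here $(0,1,1,0,\dots)$ drops to $(1,0,1,0,\dots)$, the decrease occurring at $k=2$, precisely the degree your two-entry bookkeeping discards. This is the invariant the paper uses (Fact~\ref{ergo::fact:char-vect-props-Bergelson}, Lemma~\ref{ergo::fact:char-vect-props-uniform}). You correctly flag the weight as the crux, but a proof needs the characteristic vector or an equivalent full-vector invariant; a two-coordinate truncation is not sufficient.

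The remaining ingredients you mention (the spectral argument for the linear base case, dropping constant diagonal polynomials, the multilinear reduction to mean-zero factors) are sound and match how the paper handles the corresponding points for Theorem~\ref{ergo::thm:PET-uniform-1}.
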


Here, we need a slight variation of the above theorem, already mentioned in the introduction. Refining the notion of admissibility, we shall call a family of sequences of polynomials $(p\tt_i)_{i=1}^r$ \emph{uniformly admissible} if for any $i$, $\deg(p\tt_i) > 0$ and $\lc(\deg(p\tt_i)$ are independent of $t$, and if likewise for any $i \neq j$, $\deg(p\tt_i -  p\tt_j) > 0$ and $\deg(p\tt_i -  p\tt_j)$ are independent of $t$. (Here, $t$ runs over some unspecified index set $I$.) For example, the family $(x^2+a\tt_1, x^2 + x + a\tt_2)$ is uniformly admissible, but $(x^2+b\tt_1 x + a\tt_1, x^2 + x + a\tt_2)$ is, in general, not (unless $b\tt_1$ is independent of $t$ and $b\tt_1 \neq 1$).

\begin{theorem}\label{ergo::thm:PET-uniform-1}
	Suppose that a m.p.s.\ $\fX = (X,T,\Borel,\mu)$ is weakly mixing and invertible. 
	Let $(p\tt_i)_{i=1}^r$ be a uniformly admissible family of sequences of polynomials, indexed by $t \in I$. Let $f_i \in L^\infty(\mu)$ for $i \in [r]$. Then:
	\begin{equation}
	\EE_{n \leq N} \prod_{i=1}^r T^{p\tt_i(n)} f_i \xrightarrow[N \to \infty]{L^2} \prod_{i=1}^r \int f_i d\mu 
	,\quad\text{uniformly in $t$.} 
	\end{equation}
\end{theorem}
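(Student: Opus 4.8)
The plan is to prove Theorem~\ref{ergo::thm:PET-uniform-1} by the PET (Polynomial Ergodic Theorem) induction scheme of Bergelson, taking care at every stage that all quantitative bounds depend only on the combinatorial data — degrees and leading terms of the $p_i\tt$ and their differences — which by uniform admissibility is the same for all $t \in I$. The base case will be a single polynomial $p\tt_1$ of positive degree, where weak mixing (via the van der Corput lemma and the spectral theorem) already gives $\EE_{n\le N} T^{p\tt_1(n)} f_1 \to \int f_1\,d\mu$ in $L^2$; the point is that the rate here is controlled by $\deg p\tt_1$ and the (eventual) monotonicity/equidistribution of $p\tt_1$, uniformly in $t$. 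For the inductive step I would, as usual, reduce to the case $\int f_i\,d\mu = 0$ for some $i$ and then show the average tends to $0$ in $L^2$ uniformly in $t$.

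The mechanism of the inductive step is the van der Corput trick: writing $u_N\tt(n) = \prod_{i=1}^r T^{p\tt_i(n)} f_i$, one bounds $\norm{\EE_{n\le N} u_N\tt(n)}^2$ by an average over a shift parameter $h$ of inner products $\spr{u_N\tt(n+h)}{u_N\tt(n)}$, which after applying $T^{-p\tt_{i_0}(n)}$ (for a suitably chosen ``pivot'' index $i_0$, typically one of minimal degree) becomes an average of the same shape but for the new family of polynomials $q\tt_{i,h}(n) := p\tt_i(n+h) - p\tt_{i_0}(n)$ and $q\tt_{i,h}(n) := p\tt_i(n) - p\tt_{i_0}(n)$, paired with the shifted functions $T^h f_i$, $f_i$. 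The key bookkeeping fact is that this new family has strictly smaller ``weight'' in the PET sense (the multiset of degrees, ordered appropriately, drops), and — crucially for uniformity — that the degrees and leading coefficients of all the $q\tt_{i,h}$ and their pairwise differences depend only on $h$ and on the fixed combinatorial data of the original family, \emph{not} on $t$. One has to check the new family remains uniformly admissible for all but boundedly many exceptional $h$ (those $h$ for which some difference degenerates), and that the exceptional set of $h$ is itself independent of $t$. Then the inductive hypothesis applies with a rate uniform in both $t$ and $h$, and averaging over $h\le H$ and letting $H\to\infty$ after $N\to\infty$ closes the induction.

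The main obstacle I anticipate is precisely this uniformity bookkeeping inside the PET induction: one must set up the induction so that the ``complexity'' or weight of a uniformly admissible family is a well-defined quantity depending only on the shared combinatorial data, verify that the van der Corput step strictly decreases it (this is the content of Bergelson's original argument, but here the leading coefficients of differences matter, and uniform admissibility is exactly the hypothesis that keeps them from drifting with $t$), and confirm that all the finitely many case distinctions (which index is the pivot, which differences vanish, which $h$ are exceptional) are made using only that shared data. A secondary technical point is the base case: I need equidistribution of a single integer polynomial orbit with a rate that is uniform over the family, which follows because the leading coefficient is fixed and the lower-order coefficients only affect a controlled ``burn-in'' that can be absorbed. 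Once the induction is correctly framed, the analytic inputs (van der Corput, spectral theorem, the single-polynomial Weyl-type estimate) are standard and I would not belabor them.
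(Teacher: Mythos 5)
Your overall strategy---run Bergelson's PET induction via van der Corput, and at each step check that the reduced family's combinatorial data (degrees, leading coefficients of the polynomials and their pairwise differences, the exceptional set of shifts $h$) is determined by the original family's shared data and therefore independent of $t$---is exactly the paper's approach. This bookkeeping is formalised in Lemma~\ref{ergo::fact:char-vect-props-uniform} (the characteristic vector $\chi(p^{(t)})$ and the finitely many bad $h$ do not depend on $t$), and the inductive step is precisely what you describe.

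The one place your sketch departs from the paper, and the one place your reasoning is shaky, is the base of the induction. You propose to terminate at a single polynomial $p^{(t)}_1$ of arbitrary positive degree and argue the rate is uniform in $t$ because ``the lower-order coefficients only affect a controlled `burn-in' that can be absorbed.'' That intuition is not correct: for $p^{(t)}_1(x)=ax^d+b_{d-1}(t)x^{d-1}+\cdots$ the $t$-dependent lower-order coefficients genuinely change the orbit's equidistribution, not merely an initial segment of it. A uniform bound does hold, but for a different reason---Weyl differencing $d-1$ times eliminates the lower-order terms so the resulting estimate depends only on the fixed leading coefficient---and that argument still needs to be made. The paper avoids the issue entirely by taking the base case of PET to be the \emph{all-linear} family (characteristic vectors below $(0,1,0,\dots)$), handled by a separate induction on $r$ that bottoms out at a single linear polynomial $p^{(t)}_1(x)=a_1x+b^{(t)}_1$. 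There uniformity is trivial: $\EE_{n\le N}T^{p^{(t)}_1(n)}f_1=T^{b^{(t)}_1}\bigl(\EE_{n\le N}T^{a_1n}f_1\bigr)$, the inner average does not depend on $t$, and $T^{b^{(t)}_1}$ is an $L^2$-isometry. You would do well to adopt that route, or else replace the ``burn-in'' heuristic with an explicit Weyl-differencing argument.
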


Before embarking upon the proof of the above theorem, we explain how it completes the proof of the first of our main results. 
\begin{proof}[Proof of Theorem \ref{thm:main-uniform} assuming Theorem \ref{ergo::thm:PET-uniform-1}]
	Recall that $\cA$ takes the form $\cA = \set{n \in \NN_0}{ f(T^n x) = 1}$ where $\fX = (X,T,\Borel,\mu)$ is a weakly mixing m.p.s.\ and $x_0$ is a generic point for the $0,1$-valued function $f \in L^\infty(\mu)$ with $\int f d\mu = \densNat(\cA)$. Without loss of generality, we may assume that $\fX$ is invertible.
	 
	Suppose, for the sake of contradiction, that the system
\begin{equation}
	p_1\nn(n) + m \in \cA, \quad \dots, \quad p_r\nn(n) + m \in \cA, \quad m \in \cB 
	\label{eq:002}
\end{equation}
	has no solution, and consider the quantity $$\mathcal{L}(N) = \limsup_{M \to \infty} \abs{\EE_{m \leq M} \EE_{n \leq N} 1_{\cB}(m) \bra{\prod_{i=1}^r 1_{\cA}(m+p_i\nn(n)) - \densNat(\cA)^r }},$$ 
	which can be construed as the (normalised) deviation of the number of solutions to \eqref{eq:002} from the expected value of $\densNat(\cB) \densNat(\cA)^r MN $. On one hand, since \eqref{eq:002} lacks solutions, we have $\mathcal{L}(N) = \densNat(\cB) \densNat(\cA)^r$. On the other hand, we may approximate, with the use of Cauchy-Schwartz and the ergodic theorem:
\begin{align*}
\mathcal{L}(N) & \leq \lim_{M \to \infty} 
\bra{ 
\EE_{m \leq M} 
\bra{ \EE_{n \leq N} \prod_{i=1}^r 1_{\cA}(m+p_i\nn(n)) - \densNat(\cA)^r }^2
}^{1/2}
\\ &= 
\bra{ \int_{X} \bra{ \EE_{n \leq N} \prod_{i=1}^r T^{p_i\nn(n)}f   - \densNat(\cA)^r }^2 d\mu }^{1/2} 
\\ &= \norm{  \EE_{n \leq N} \prod_{i=1}^r T^{p_i\nn(n) } f - \densNat(\cA)^r }_{L^2(\mu)}.
\end{align*}

	An application of Theorem \ref{ergo::thm:PET-uniform-1} gives, in particular: 
	$$\EE_{n \leq N}  \prod_{i=1}^r T^{p_i\nn(n)} f \xrightarrow[N \to \infty]{L^2} \densNat(\cA)^{r}.$$
	
	Thus, if $N$ is large enough, then we conclude that $\mathcal{L}(N) < \densNat(\cB) \densNat(\cA)^r$, which is the sought for contradiction.
\end{proof}
\begin{remark}
	In the case $\cA = \cB$, a similar reasoning gives the asymptotic formula for the number of solutions $\cN(N,M)$ mentioned at the beginning of this section: $$\cN(N,M) = \densNat(\cA)^{r+1} MN (1+o_{N\to \infty} (1) + o_{N;M\to \infty}(1)).$$
\end{remark}

The remainder of this section is devoted to proving Theorem \ref{ergo::thm:PET-uniform-1}. In our argument, we follow the approach of Bergelson rather closely, taking care to account for uniformity of convergence. 
We will need an uniform version of van der Corput Lemma, which is a slight variation on the usual statement. We include the proof, which is rather standard, in the appendix, for the convenience of the reader.

\newcommand{\cH}{\mathcal{H}}
\begin{lemma}[Uniform van der Corput]\label{ergo::lem:vdCorput}
	Suppose that $(u\tt_n)_n$ is a sequence of vectors in a Hilbert space $\cH$ with $\norm{u\tt_n} \leq 1$, indexed by $t \in I$. Suppose further that for $s\tt_h \in \RR_{\geq 0}$ we have:
	\begin{equation}
		\abs{ \EE_{n \leq N} \spr{u\tt_n}{u\tt_{n+h}} } \leq s\tt_h + o_{N \to \infty}(1),
	\end{equation}	 
	where the error term is uniform in $t$. Suppose further that:
	\begin{equation}
		\EE_{h \leq H} s\tt_h \xrightarrow[H \to \infty]{} 0
	,\quad\text{uniformly in $t$.} 
	\end{equation}	 
	Then we have:
	\begin{equation}
	\EE_{n \leq N} u\tt_n \xrightarrow[N \to \infty]{} 0
	,\quad\text{uniformly in $t$.} 
	\end{equation}	 
\end{lemma}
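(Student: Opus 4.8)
The plan is to run the classical van der Corput argument, taking care that every error term that appears is controlled by the uniform bound $\norm{u_n^{(t)}} \le 1$ and is therefore uniform in $t$.

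\emph{First, a finitary van der Corput inequality.} Fix $H \le N$. Replacing $\EE_{n\le N} u_n^{(t)}$ by the smoothed average $\EE_{n \le N}\bigl( \EE_{0 \le j < H} u^{(t)}_{n+j}\bigr)$ costs at most $2H/N$ in norm, since the difference is a combination of $O(H)$ unit vectors divided by $N$. Pulling the outer average inside the norm by Cauchy--Schwarz and expanding the square gives
\[
 \norm{ \EE_{n \le N} \EE_{0 \le j < H} u^{(t)}_{n+j} }^2 \le \EE_{0 \le j,k < H}\ \EE_{n \le N} \spr{u^{(t)}_{n+j}}{u^{(t)}_{n+k}}.
\]
For each pair $(j,k)$, shifting the index of summation turns $\EE_{n \le N}\spr{u^{(t)}_{n+j}}{u^{(t)}_{n+k}}$ into $\EE_{n\le N}\spr{u^{(t)}_n}{u^{(t)}_{n+\abs{k-j}}}$ up to a boundary error of size $O(H/N)$, again uniform in $t$; collecting the pairs according to $h = \abs{k-j}$ and treating the diagonal $h=0$ via $\norm{u_n^{(t)}}\le 1$, one obtains
\[
 \norm{ \EE_{n \le N} u^{(t)}_n }^2 \le \frac1H + \frac2H \sum_{h=1}^{H-1} \abs{ \EE_{n \le N} \spr{u^{(t)}_n}{u^{(t)}_{n+h}} } + O\!\left(\frac HN\right),
\]
with all implied constants absolute, in particular independent of $t$.

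\emph{Then, insert the hypotheses and take the two limits in the right order.} By assumption $\abs{\EE_{n\le N}\spr{u^{(t)}_n}{u^{(t)}_{n+h}}} \le s^{(t)}_h + \eta_h(N)$ where $\eta_h(N) \to 0$ as $N\to\infty$ uniformly in $t$. Substituting, bounding $\frac2H\sum_{h=1}^{H-1} s^{(t)}_h \le 2\,\EE_{h\le H} s^{(t)}_h$, and taking the supremum over $t$, I would get
\[
 \sup_t \norm{ \EE_{n\le N} u^{(t)}_n }^2 \le \frac1H + 2\sup_t \EE_{h\le H} s^{(t)}_h + \frac2H\sum_{h=1}^{H-1}\sup_t \eta_h(N) + O\!\left(\frac HN\right).
\]
Fixing $H$ and letting $N\to\infty$ annihilates the last two terms (a finite sum of uniformly null sequences, plus $O(H/N)$), leaving $\limsup_{N\to\infty}\sup_t \norm{\EE_{n\le N} u^{(t)}_n}^2 \le \frac1H + 2\sup_t \EE_{h\le H} s^{(t)}_h$. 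Finally, letting $H\to\infty$ and using the hypothesis that $\EE_{h\le H} s^{(t)}_h \to 0$ uniformly in $t$, the right-hand side tends to $0$; as the left-hand side does not depend on $H$, it must be $0$, which is exactly the claim.

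\emph{The main point to watch} is the interaction of the two limits $N\to\infty$ and $H\to\infty$ with the supremum over $t$: one must arrange all the error terms generated by the van der Corput manipulation --- the $O(H/N)$ boundary terms and the $\eta_h(N)$ coming from the hypothesis --- to be uniform in $t$, so that, $H$ being fixed, the passage $N\to\infty$ is uniform, and only afterwards is $H$ sent to infinity. This is precisely where the standing assumption $\norm{u_n^{(t)}}\le 1$ is needed. Everything else is routine bookkeeping.
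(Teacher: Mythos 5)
Your proposal is correct and follows essentially the same van der Corput strategy as the paper: smooth the average over a window of length $H$, apply Cauchy--Schwarz, reduce to the autocorrelation terms $\EE_{n\le N}\spr{u_n^{(t)}}{u_{n+h}^{(t)}}$ up to boundary errors of size $O(H/N)$, and then send $N\to\infty$ before $H\to\infty$, checking uniformity in $t$ at each step. The only difference is in the final bookkeeping: the paper controls $\EE_{h,h'\le H} s^{(t)}_{|h-h'|}$ by a summation-by-parts followed by a split at an auxiliary threshold $K$, whereas you bound it directly by $\tfrac1H + 2\,\EE_{h\le H} s^{(t)}_h$, which plugs straight into the hypothesis; your streamlining is valid (it uses $s_h^{(t)}\ge 0$ and $\norm{u_n^{(t)}}\le 1$ for the diagonal term) and avoids the extra parameter.
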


\begin{proof}[Proof of Theorem \ref{ergo::thm:PET-uniform-1}, linear case]

We will now deal with the case of Theorem \ref{ergo::thm:PET-uniform-1} when $\deg p_i\tt = 1$ for all $i$. In this case, $p_i\tt$ are necessarily of the form $p_i\tt(x) = a_i n + b\tt_i$, where $a_i$ are distinct integers which do not depend on $t$. 

We may assume without loss of generality that for each $i$, we have $\int f_i d\mu = 0$. Indeed, if this is not the case, we may simply replace the original functions by $\tilde{f_i} := f_i - \int f_i d\mu$. Likewise, we may assume $\norm{f_i}_\infty \leq 1$, else we may rescale.

The case $r = 1$, when there is only one polynomial, is simple. Indeed, we then have:
$$ \EE_{n \leq N} T^{p_1\tt(n)} f_1 = T^{b\tt_i} \left(  \EE_{n \leq N} T^{a_1 n} f_1 \right).$$
The average in the brackets does not depend on $t$, and converges to $0$ in $L^2$ which follows e.g.\ from Theorem \ref{ergo::thm:PET-Bergelson}. Since $T^{b\tt_i}$ preserves the $L^2$ norm, we have:
$$ \EE_{n \leq N} T^{p_1\tt(n)} f_1 \xrightarrow[N \to \infty]{L^2} 0, \text{ uniformly in $t$}.$$

For $r \geq 2$ we proceed by induction using van der Corput Lemma. Let us write $u_n := \prod_{i=1}^r T^{p_i\tt(n)} f_i$. We have:
\begin{align*}
	\EE_{n \leq N}\spr{u_n\tt}{u_{n+h}\tt} &= \int \prod_{i=1}^r T^{a_in + b_i\tt} \left( f_i \cdot T^{a_ih}f_i\right) d\mu \\
	&= \int \tilde f_{r,h} \cdot \EE_{n \leq N} \prod_{i=1}^{r-1} T^{\tilde a_i n + \tilde b_i\tt} \tilde f_{i,h} d\mu
\end{align*}
where we define:
\begin{align*}
	\tilde{a}_i := a_i - a_r, \quad \tilde{b}_i\tt := b_i\tt - b_r\tt, \quad \tilde f_{i,h} = f_i \cdot T^{a_i h} f_i. 
\end{align*}
The sequence of polynomials $\tilde p_i\tt(x) = \tilde a_i x + \tilde b_i\tt$ for $i \in [r-1]$ is uniformly admissible, since $\tilde p_i\tt- \tilde p_j\tt = p_i\tt - p_j\tt$. Hence, we can invoke the inductive assumption to conclude that:
$$ \EE_{n \leq N} \prod_{i=1}^{r-1} T^{\tilde a_i n + \tilde b_i\tt} \tilde f_{i,h} = 
	 \prod_{i=1}^{r-1} \int \tilde f_{i,h} d\mu + o_{h;N\to \infty}(1),
$$
with the error term independent of $t$. Letting $s_h = \abs{\prod_{i=1}^{r} \int \tilde f_{i,h}d \mu}$ we have:
$$
\abs{
\EE_{n \leq N}\spr{u_n\tt}{u_{n+h}\tt}
} \leq s_h + o_{h;N\to \infty}(1).
$$
By a standard argument, we have $\EE_{h \leq H} s_h \to 0$ as $H \to \infty$, and convergence is automatically uniform in $t$, since $s_h$ does not depend on $t$. We are now in position to apply Lemma \ref{ergo::lem:vdCorput} to conclude that $\EE_{n \leq N} u_n\tt \to 0$ in $L^2$ as $N \to \infty$, uniformly in $t$. This finishes the inductive step, and thus the proof in the case $\deg p_i\tt = 1$.

\end{proof} 
\section{PET induction}\label{sec:PET}

To prove the general case of Theorem \ref{ergo::thm:PET-uniform-1} we will use PET induction. We will now introduce the key concepts in separation from the proof of this particular result.

\begin{definition}[Characteristic vector]
	Let $p = (p_i)_{i=1}^r$ be the a sequence of polynomials (not necessarily admissible). For $k \in \NN$, let $F_k$ be the set of those $p_i$ with $\deg p_i = k$. We define the \emph{characteristic vector} of $p$, which we denote by $\charX {p}$, by declaring $\charX {p}_k$ to be equal to the number of different leading coefficients $\lc(p)$ for $p \in F_k$.
\end{definition}

It does not matter much if we define characteristic vectors to have entries for all $k$ or just for $k \leq \max_i \deg p_i$. For the sake of simplicity, we assume the former. We make the set of possible characteristic vectors (i.e.\ $\NN_0$-valued sequences with finitely many non-zero entries) into an ordered set by introducing reverse-lexicographical order. Recall that $\charN > \charN'$ if for the largest $k$ with  $\charN_k \neq \charN'_k$ we have  $\charN_k > \charN'_k$. It is well known fact that $\NN_0^{\NN}$ is well-ordeder by the reverse lexicographical order. Thus, any decreasing sequence $\charN > \charN' > \charN'' > \dots$ has to be finite.

For the inductive step, we shall need the following operation. Let $p=(p_i)_{i=1}^r$ be a sequence of polynomials. We may, without loss of generality, assume that $\deg p_1 \geq \deg p_2 \geq \dots \geq  \deg p_r$. We then define the sequence $\tilde{p}_h$ to be the concatenation of two sequences:
\begin{align} 
	\tilde{p}_{0,h,i}(x) &:= p_i(x) - p_r(x),\ &&i \in [r-1] \label{ergo::eq:def-p-1hi} \\
	\tilde{p}_{1,h,i}(x) &:= p_i(x+h)- p_r(x),\ &&i \in [r]. \label{ergo::eq:def-p-2hi}
\end{align}

The following statements give base for the PET induction. We cite it here merely as a list of facts. For proofs, which are not difficult, we refer the reader to~\cite{Bergelson1987}.
\begin{fact} \label{ergo::fact:char-vect-props-Bergelson}
Let $p = (p_i)_{i=1}^r$ be an admissible sequence of polynomials.
\begin{enumerate}
\item If $\chi(p)_1 = 0$, then $\tilde p_h$ is admissible for all but finitely many $h$.
\item The characteristic vector $\charX {\tilde{p}_h}$ takes the same value for all but finitely many values of $h$.
\item We have $\charX {\tilde{p}_h} < \charX {p}$. 
\end{enumerate}
\end{fact}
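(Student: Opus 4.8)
\textbf{Plan for proving Fact~\ref{ergo::fact:char-vect-props-Bergelson}.}

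The plan is to verify the three assertions by direct bookkeeping on degrees and leading coefficients of the polynomials $\tilde p_{1,h,i}$ and $\tilde p_{2,h,i}$ defined in \eqref{ergo::eq:def-p-1hi}--\eqref{ergo::eq:def-p-2hi}, treating $h$ as a parameter. First I would record the elementary observations that will be used throughout: for a polynomial $q$ of degree $d \geq 1$, the shifted polynomial $q(x+h)$ again has degree $d$ and the same leading coefficient $\lc q$, while the lower-order coefficients become polynomials in $h$; and $\deg(q(x+h) - q(x)) = d-1$ with leading coefficient $d\cdot\lc q$ (for $h\neq 0$), which in particular is nonzero. These facts make it transparent how the shift-and-subtract operation $p \mapsto \tilde p_h$ lowers complexity.

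For part~(1), assume $\chi(p)_1 = 0$, i.e.\ no $p_i$ is linear, so every $p_i$ has degree $\geq 2$ (recall $\deg p_i > 0$ since $p$ is admissible and $p_i(n)\to\infty$). I would argue admissibility of $\tilde p_h$ by checking the two defining conditions: that each entry tends to infinity, and that differences of distinct entries are nonconstant. The differences within the first block are $p_i - p_j$ for $i \neq j$ in $[r-1]$, nonconstant by admissibility of $p$; differences within the second block are $p_i(x+h) - p_j(x+h)$, which is the shift of $p_i - p_j$ and hence still nonconstant; and the cross-differences are $\tilde p_{1,h,i}(x) - \tilde p_{2,h,j}(x) = (p_i(x) - p_r(x)) - (p_j(x+h) - p_r(x)) = p_i(x) - p_j(x+h)$. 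The only way this can be constant is if $p_i$ and $p_j(\cdot + h)$ have equal degree and equal leading coefficient and all intermediate coefficients agree; since $\deg p_i \geq 2$ this forces in particular the second-highest coefficients to match, which (because shifting by $h$ changes that coefficient by a nonzero multiple of $h$ times the leading coefficient when $i=j$, and otherwise pins down a specific value of $h$) excludes all but finitely many $h$. Tending to infinity is clear once the degree is positive, and one checks the leading coefficient stays positive; the degenerate possibility that an entry becomes constant (degree drop to $0$) is again a polynomial-in-$h$ condition that fails for all but finitely many $h$. This is where the hypothesis $\chi(p)_1 = 0$ is essential: if some $p_i$ were linear, $p_i - p_r$ or $p_i(x+h)-p_r(x)$ could be constant identically, destroying admissibility for \emph{every} $h$.

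For part~(2), I would observe that the degree of each $\tilde p_{a,h,i}$ is independent of $h$ (shifting does not change degree, and subtracting $p_r$ changes it in a way depending only on the degrees and leading coefficients of $p_i$ and $p_r$, not on $h$), and the leading coefficient of each entry is a polynomial in $h$; two entries of the same degree share a leading coefficient either identically in $h$ or for at most finitely many $h$. Hence the count $\chi(\tilde p_h)_k$, for each fixed $k$, is eventually constant, and since only finitely many $k$ are relevant, $\chi(\tilde p_h)$ is eventually constant in $h$. For part~(3), the main point: let $d = \deg p_r = \min_i \deg p_i$ be the smallest degree appearing in $p$. Every polynomial in the first block has degree $< \deg p_i \le \max_i\deg p_i$ after subtracting $p_r$, but more to the point, I would track the top entry of the characteristic vector. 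Writing $D = \max_i \deg p_i$: if $D > d$, then entries of the form $p_i(x) - p_r(x)$ and $p_i(x+h) - p_r(x)$ with $\deg p_i = D$ still have degree $D$, but there are no longer \emph{two} of them with distinct leading coefficients coming from $p_r$-type terms, and crucially the entry $\tilde p_{2,h,r}(x) = p_r(x+h) - p_r(x)$ has degree $d - 1 < D$; one shows that at the largest level $k$ where $\chi$ changes, the count strictly drops because the copies of $p_r$ (which contributed, together with some $p_i$, to the count at level $d$) are eliminated — every difference $p_i(x) - p_r(x)$ either has strictly smaller degree (when $\deg p_i = d$) or loses $p_r$'s leading behaviour. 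The cleanest route is Bergelson's original: identify the minimal-degree group, note $\tilde p$ replaces the $\geq 1$ polynomials of that degree-and-leading-coefficient class by polynomials of strictly smaller degree, while not introducing anything new at any higher level, so $\chi(\tilde p_h) < \chi(p)$ in reverse-lex order. Since the detailed verification is routine and appears in \cite{Bergelson1987}, I would present parts (1)--(3) as above with the degree/leading-coefficient computations spelled out only to the extent needed, and cite \cite{Bergelson1987} for the remainder. The main obstacle is purely organizational: keeping straight which of the finitely many exceptional values of $h$ each clause excludes, and correctly locating the top coordinate at which $\chi$ decreases in part~(3).
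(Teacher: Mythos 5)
The paper does not actually prove Fact~\ref{ergo::fact:char-vect-props-Bergelson}: it explicitly defers to \cite{Bergelson1987} (``we cite it here merely as a list of facts'') and instead proves the closely related uniform analogue, Lemma~\ref{ergo::fact:char-vect-props-uniform}. So your proposal is not a different route from the paper's --- it supplies the standard degree and leading-coefficient bookkeeping that the paper chooses to leave to the reference, and it is consistent in approach with the paper's proof of Lemma~\ref{ergo::fact:char-vect-props-uniform}. Two corrections are in order. In part~(1) you assert that ``one checks the leading coefficient stays positive''; this fails in general, e.g.\ when $\deg p_i = \deg p_r$ and $\lc p_i < \lc p_r$, in which case $p_i - p_r \to -\infty$. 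This is a wrinkle the paper itself glosses over (Bergelson avoids it by working with invertible systems and allowing $\abs{p_i(n)}\to\infty$), so it should be flagged rather than asserted as obvious. In part~(3), the sentence that ``every polynomial in the first block has degree $< \deg p_i$ after subtracting $p_r$'' is simply false whenever $\deg p_i > \deg p_r$ (the degree is then unchanged); you then abandon that line, and the argument you actually give --- that $\charX{\tilde p_h}_k = \charX{p}_k$ for $k > \deg p_r$ while the count at level $\deg p_r$ strictly drops because $p_r$ (and its shift $p_r(\cdot+h)-p_r(\cdot)$, of degree $\deg p_r - 1$) is removed --- is the correct one and matches the paper's proof of item~(\ref{ergo::cond:B@char-vect-uni}) of Lemma~\ref{ergo::fact:char-vect-props-uniform}.
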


Essentially the same statement is true with admissible sequences of polynomials replaced by a uniformly admissible families of sequences.

\begin{lemma}\label{ergo::fact:char-vect-props-uniform}
 Let $p\tt = (p\tt_i)_{i=1}^r$ be a uniformly admissible sequence of polynomials.
\begin{enumerate}
\item\label{ergo::cond:O@char-vect-uni} The characteristic vector $\charX {p\tt}$ does not depend on $t$.
\item\label{ergo::cond:A@char-vect-uni} For all but finitely many $h$, the degrees and leading coefficients of $\tilde p\tt_{h,k}$ and $\tilde p\tt_{h,k} - \tilde p\tt_{h,l}$ do not depend on $t$.
\item\label{ergo::cond:B@char-vect-uni} For all but finitely many $h$, $\charX {\tilde{p}\tt_h}$ does not depend on $h$ and $t$, and we have $\charX {\tilde{p}\tt_h} < \charX {p\tt}$.
\item\label{ergo::cond:C1@char-vect-uni} If $\charX {p\tt}_1 =0$ then for all but finitely many $h$, $\tilde{p}\tt_h$ is uniformly admissible.
\item\label{ergo::cond:C2@char-vect-uni} If $\charX {p\tt}_1 \geq 1$ then for all but finitely many $h$, we have for any $(\sigma,i) \neq (\rho,j)$ ($\sigma,\rho = 0$ or $1$, $i,j \in [r]$ or $[r-1]$ accordingly) that $\deg( \tilde p\tt_{\sigma,h,i} - \tilde p\tt_{\rho,h,j} ) \geq 1$, except when $i = j$ and $\deg p\tt_i = 1$.	 
\end{enumerate}
\end{lemma}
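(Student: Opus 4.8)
The plan is to reduce everything to the corresponding facts for a single admissible sequence (Fact \ref{ergo::fact:char-vect-props-Bergelson}), while carefully tracking which quantities depend on the index $t$ and which do not. The basic principle is that \emph{the combinatorial data} — the partition of indices into degree classes, the count of distinct leading coefficients at each degree, the degrees of all pairwise differences — is determined by the leading terms of the $p\tt_i$, and by the very definition of uniform admissibility these leading terms (for $p\tt_i - p\tt_j$, $i\neq j$) do not depend on $t$. So my first step is to set up bookkeeping: write $p\tt_i(x) = c_i x^{d_i} + (\text{lower order, possibly $t$-dependent})$ where, by uniform admissibility, whenever $d_i = d_j$ and $c_i = c_j$ the difference $p\tt_i - p\tt_j$ has a leading term independent of $t$, and more generally the map $i \mapsto (d_i, c_i)$ and the function $(i,j)\mapsto \deg(p\tt_i - p\tt_j)$ together with its leading coefficient are $t$-independent. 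Part \ref{ergo::cond:O@char-vect-uni} is then immediate: $\charX{p\tt}_k$ counts the number of distinct values of $c_i$ among those $i$ with $d_i = k$, and this count is read off from the $t$-independent data, hence is constant in $t$.

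For parts \ref{ergo::cond:A@char-vect-uni}, \ref{ergo::cond:B@char-vect-uni}, I would compute the leading terms of the polynomials $\tilde p\tt_{h,\sigma,i}$ defined by \eqref{ergo::eq:def-p-1hi}–\eqref{ergo::eq:def-p-2hi} explicitly. For $\tilde p\tt_{1,h,i} = p\tt_i - p\tt_r$, the leading term is exactly the leading term of $p\tt_i - p\tt_r$, which is $t$-independent by hypothesis (when $i\neq r$; here $r$ has minimal degree). For $\tilde p\tt_{2,h,i}(x) = p\tt_i(x+h) - p\tt_r(x)$, expanding $p\tt_i(x+h)$ by the binomial theorem, the top-degree part is the same as that of $p\tt_i(x) - p\tt_r(x)$ unless cancellation forces us to look lower, in which case the next surviving term is of the form $d_i c_i h\, x^{d_i - 1} + \cdots$; the coefficient $d_i c_i$ is $t$-independent since $c_i$ is. Iterating this — subtracting off equal leading parts and descending in degree — every leading term that appears is a polynomial in $h$ with $t$-independent coefficients, so for all but finitely many $h$ (those avoiding the finitely many roots of these coefficient-polynomials, which are themselves $t$-independent sets) the leading terms of the $\tilde p\tt_{h,\cdot}$ and of their pairwise differences are $t$-independent; this is \ref{ergo::cond:A@char-vect-uni}. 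Given \ref{ergo::cond:A@char-vect-uni}, part \ref{ergo::cond:B@char-vect-uni} follows by applying Fact \ref{ergo::fact:char-vect-props-Bergelson}(2),(3) to any single value of $t$: the inequality $\charX{\tilde p\tt_h} < \charX{p\tt}$ and the eventual constancy in $h$ hold for that $t$, and by \ref{ergo::cond:A@char-vect-uni} together with \ref{ergo::cond:O@char-vect-uni} the characteristic vectors involved are the same for every $t$, so the statement transfers uniformly. One subtlety to record: in Bergelson's setup $\tilde p_h$ is only formed when $\charX{p}_1 = 0$, so for \ref{ergo::cond:B@char-vect-uni} in the case $\charX{p\tt}_1 \geq 1$ one works instead with the modified list as in \ref{ergo::cond:C2@char-vect-uni}, retaining the linear polynomials; the degree-counting argument is the same.

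Parts \ref{ergo::cond:C1@char-vect-uni} and \ref{ergo::cond:C2@char-vect-uni} are the case distinction on $\charX{p\tt}_1$. If $\charX{p\tt}_1 \leq 1$ then there is at most one linear polynomial in the family (up to $t$-dependent constant terms), so the obstruction that ruins admissibility — two distinct linear $\tilde p$'s whose difference is a constant — does not arise, and one checks that $\tilde p\tt_h$ is uniformly admissible for all but finitely many $h$ by combining Fact \ref{ergo::fact:char-vect-props-Bergelson}(1) with the $t$-uniformity of leading terms from \ref{ergo::cond:A@char-vect-uni}. If $\charX{p\tt}_1 \geq 2$, the family $\tilde p\tt_h$ genuinely need not be admissible (differences of the degree-$1$ pieces may be nonzero constants), but all other pairwise differences still have positive degree; the precise statement \ref{ergo::cond:C2@char-vect-uni} — that $\deg(\tilde p\tt_{h,\sigma,i} - \tilde p\tt_{h,\rho,j}) \geq 1$ except when $i=j$ and $\deg p\tt_i = 1$ — is verified by inspecting the leading-term computation above: the only way to get a constant difference is when the degree-$1$ contributions $a_i x$ coincide, i.e. $i = j$ with $\deg p\tt_i = 1$.

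The main obstacle I expect is part \ref{ergo::cond:A@char-vect-uni}: one must argue that throughout the (finite, but a priori $t$-dependent-looking) cascade of ``subtract equal leading parts, drop in degree'' steps needed to identify $\deg(\tilde p\tt_{h,\sigma,i} - \tilde p\tt_{h,\rho,j})$, no $t$-dependence ever sneaks into a \emph{leading} coefficient, even though plenty of $t$-dependence sits in the lower-order coefficients. The clean way to see this is to observe that whether two of the $\tilde p$'s agree in their top $\ell$ coefficients is itself governed only by the $t$-independent data (degrees and leading coefficients of the original $p\tt_i$ and of their differences, plus binomial factors and powers of $h$), so the entire descent is $t$-independent; then the finitely many bad values of $h$ are the roots of a fixed finite collection of $t$-independent polynomials in $h$, and uniformity is automatic. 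Once this is set up carefully, the remaining parts are essentially transcriptions of Bergelson's arguments with the index $t$ carried along.
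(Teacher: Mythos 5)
Your overall strategy matches the paper's: every item reduces to the fact that the degrees and leading coefficients of the $p_i^{(t)}$ and of their pairwise differences are $t$-independent by definition of uniform admissibility, and one tracks how that data changes under the passage to $\tilde p_h^{(t)}$. Parts (1), (3), (5) come out correctly; for (3) you transfer Fact \ref{ergo::fact:char-vect-props-Bergelson}(2)--(3) from a single $t$ instead of the paper's direct computation of $\chi(\tilde p_h)_k$ at $k=\deg p_r$ versus $k>\deg p_r$, and both routes are fine.

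The gap is in your treatment of part (2). You argue by an iterated ``cascade'' --- subtract matching leading parts and descend in degree --- and assert that every leading coefficient met in this process is a polynomial in $h$ with $t$-independent coefficients. That assertion becomes false as soon as the cascade goes past one step. Concretely, write $p_i^{(t)}(x) = ax^d + b_1^{(t)}x^{d-1} + \cdots$ and $p_j^{(t)}(x) = ax^d + b_2^{(t)}x^{d-1} + \cdots$. The $x^{d-1}$ coefficient of $p_i^{(t)}(x+h) - p_j^{(t)}(x)$ is $dah + \bigl(b_1^{(t)}-b_2^{(t)}\bigr)$, and the difference $b_1^{(t)} - b_2^{(t)}$ is $t$-independent (it is the leading coefficient of $p_i^{(t)}-p_j^{(t)}$, or zero). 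But the $x^{d-2}$ coefficient contains the term $(d-1)b_1^{(t)}h$, and $b_1^{(t)}$ \emph{alone} need not be $t$-independent: uniform admissibility controls differences of coefficients, not individual ones. What rescues the argument --- and what the paper records explicitly --- is that the one-step coefficient $dah + (b_1^{(t)}-b_2^{(t)})$ vanishes for at most one value of $h$, so for every other $h$ the descent terminates at depth one, before any $t$-dependent quantity is reached. Your proposal never states this termination, and a reader following the cascade literally would hit a false intermediate claim; inserting the one-line observation is exactly the crux of part (2). (A smaller aside: in (4) you locate the obstruction in ``two distinct linear $\tilde p$'s with constant difference'', but when $\chi(p^{(t)})_1=1$ the actual problem is that $\tilde p^{(t)}_{2,h,r} = p_r^{(t)}(x+h)-p_r^{(t)}(x)$ is itself constant; the conclusion survives but the diagnosis is off.)
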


\begin{proof}
Item \eqref{ergo::cond:O@char-vect-uni} is clear, since $\charX {p\tt}$ depends only on leading coefficients and degrees of polynomials in $p\tt$, and these are independent of $t$.

For item \eqref{ergo::cond:A@char-vect-uni}, we first deal with leading terms of polynomials in $\tilde p\tt_h$. These are either of the form $p\tt_i(x) - p\tt_r(x)$, or of the form $p\tt_i(x+h) - p\tt_r(x)$. In the former case, the leading term does not depend on $t$ by assumption. In the latter case, we reduce to the former unless $\deg p_i\tt = \deg p_r\tt$ and $\lc p_i\tt = \lc p_r\tt$. When this happens, we can write $p_i\tt(x) = a x^d + b_1\tt x^{d-1} + q_1\tt(x)$ and $p_r\tt(x) = a x^d + b_2\tt x^{d-1} + q_2\tt(x)$, where $a$ and $b:=b_1\tt-b_2\tt$ are independent of $t$. It follows that $\lc ( p_i\tt(x+h) - p_r\tt(x)) = d a h + b$, except for at most one value of $h$, when this is $0$.

Secondly, we deal with leading terms of differences. They are of the form $p_{i}(x+\sigma h ) - p_j(x + \rho h)$ with $\sigma, \rho \in \{0,1\}$. In the case when $\rho = 0$, we use essentially the same argument as before. The case $\rho = 1,\ \sigma = 0$ follows by the same argument as the case $\rho = 0,\ \sigma = 1$. Finally, the case $\rho = 1,\ \sigma = 1$ follows from the case $\rho = 0,\ \sigma = 0$ by a change of variable.

For item \eqref{ergo::cond:B@char-vect-uni}, we notice that the argument in  \eqref{ergo::cond:A@char-vect-uni} shows that $\charX {\tilde p\tt_h}_k = \charX {p\tt}_k$ for all $k > \deg p_r$. It will suffice to check that for $k = \deg p_r$ we have $\charX {\tilde p\tt_h}_k < \charX {p\tt}_k$. This follows, because each $p_i\tt$ with $\deg p_i\tt = \deg p_r\tt$ contributes $1$ to $\charX {\tilde p\tt_h}_k$, except for $p_r\tt$ itself.

For items \eqref{ergo::cond:C1@char-vect-uni} and \eqref{ergo::cond:C2@char-vect-uni}, we notice that the only condition that remains to be checked to verify that $\tilde p_h\tt$ are uniformly admissible for almost all $h$ is that the differences $p\tt_{\sigma,h,i}(x) - p\tt_{\rho,h,j}(x)$ should be non-constant. The only possible degrees of such difference  are $\deg(p_i\tt-p_j\tt)$ (if the leading terms differ or $\sigma = \rho$), or $\deg p_i\tt - 1$ (otherwise). In the former case, we have $\deg(p_i\tt-p_j\tt) > 0$ by assumption. In the latter case, we have  $\deg p_i\tt - 1 > 0$, unless $\deg p_i\tt = 1$.
\end{proof}
With this machinery, we are ready to complete the proof.
\begin{proof}[Proof of Theorem \ref{ergo::thm:PET-uniform-1}, general case]
We proceed by induction of $\charN = \charN(p\tt)$. Because the set of all characteristic vectors is well-ordered, we may assume that the claim of the theorem is true for any $p^{\prime(t)}$ with $\charN(p^{\prime(t)}) < \charN$. 
 We have already dealt with all $\chi < (0,1,0,\cdots)$.

Take any $\charN \geq (0,1,0,\cdots)$, any uniformly admissible $p\tt = (p\tt_i)_{i=1}^r$ such that $\charX {p\tt} = \charN$, and let $f_i \in L^\infty(\mu)$. As before, we may assume that $\int f_i d\mu = 0$ and $\norm{f_i}_{\infty}\leq 1$ for all $i$. We need to show that
	\begin{equation*}
	\EE_{n \leq N} \prod_{i=1}^r T^{p\tt_i(n)} f_i \xrightarrow[N \to \infty]{L^2} 0,\quad
	\mbox{uniformly in $t$.}  
	\end{equation*}
	
	Let $u_n := \prod_{i=1}^r T^{p\tt_i(n)} f_i$. Bearing in mind that we hope to apply van der Corput Lemma, we compute:
	\begin{align}
	\EE_{n \leq N} \spr{u_n}{u_{n+h}}
	&= \EE_{n \leq N}  \int \prod_{i=1}^r T^{p\tt_i(n)} f_i 
		\cdot \prod_{i=1}^r T^{p\tt_i(n+h)} f_i d\mu \\
	&= \int f_r \cdot \EE_{n \leq N} \left( \prod_{i=1}^{r-1} T^{\tilde p\tt_{0,h,i}(n)} f_i 
		\cdot \prod_{i=1}^r T^{\tilde p\tt_{1,i,h}(n)} f_i \right) d\mu 
	\label{ergo::eq:065}
	\end{align}
	with $\tilde{p}_{0,h,i}$ and $\tilde{p}_{1,h,i}$ defined as in \eqref{ergo::eq:def-p-1hi} and \eqref{ergo::eq:def-p-2hi}. 	Except for finitely many values of $h$, we have that $\charX {\tilde{p}_h} < \charN$ does not depend on $t$. 

	We now need to branch out into two cases. Suppose first that $\charX {p\tt}_1 = 0$. Then, $\tilde p_h\tt$ is uniformly admissible and $\charX { p_h\tt } < \charN$, so by the inductive assumption we may write:
	\begin{align}
	\EE_{n \leq N} \left( \prod_{i=1}^{r-1} T^{\tilde p\tt_{0,h,i}(n)} f_i 
		\cdot \prod_{i=1}^r T^{\tilde p\tt_{1,h,i}(n)} f_i \right) = o_{h;N \to \infty}(1).	
	\label{ergo::eq:066}	
	\end{align}
	The decay rate implicit in the $o$-notation is independent of $t$. Hence, the assumptions of van der Corput Lemma \ref{ergo::lem:vdCorput} are satisfied with $s_h = 0$ for all but finitely many $h$. Application of the lemma gives precisely the sought convergence. 
	
	Secondly, suppose that $\charX {p\tt}_1 \neq 0$. In this case, let $s$ denote the number of linear polynomials among $p_i\tt$, and let $r' = r-s+1$. We will adapt the argument from the linear case. If the linear polynomials in $p\tt$ are given by $p_i\tt(x) = a_i x + b_i\tt$ then the expression under the integral in \eqref{ergo::eq:065} becomes:
	$$
	\tilde f_r \cdot \EE_{n \leq N} \left( \prod_{i=1}^{r'} T^{\tilde p\tt_{0,h,i}(n)} f_i 
		\cdot \prod_{i=1}^{r'} T^{\tilde p\tt_{1,h,i}(n)} f_i \right)	
		\cdot \prod_{i=r'+1}^{r-1} T^{\tilde a_i n + \tilde b_i\tt} \tilde f_{i,h}
		,
	$$
	where $\tilde a_i = a_i - a_r$, $b_i\tt = b_i\tt - b_r\tt$ and $\tilde f_{i,h} = f_i \cdot T^{a_i h} f_i$. We may now apply the inductive assumption to the uniformly admissible sequence $\tilde p^{\prime (t)}_h$, which is the concatenation of $( \tilde p_{0,h,i}\tt(x) )_{i \in [r']}$, $( \tilde p_{1,h,i}\tt )_{i \in [r']}$ and $( \tilde a_i x + \tilde b_i\tt )_{i \in (r',r]}$. Note that $\charX { \tilde p^{\prime (t)}_h } < \charX { \tilde p\tt_h } < \charN$ and that $r' > 0$ because $\charN  > (0,1,0,\dots)$. Thus, we recover the bound from \eqref{ergo::eq:066}, and the rest of the argument proceeds in the same way.
	
\end{proof} 
\section{Doubly polynomial averages}\label{sec:Polynomial}
\newcommand{\Folner}{F\o{}lner}
\newcommand{\T}{M}
\newcommand{\tto}[2]{ \xrightarrow[#1]{#2} }

We now deal with polynomial families of polynomials, such as the ones which appear in Theorem \ref{thm:main-polynomial}. Our first step is again to translate the problem into a question about  convergence of certain polynomial averages. Fortunately, we will be able to essentially reduce the problem to known results on $L^2$ convergence of polynomial averages along \Folner\ sequences.

In this section, rather than uniform convergence we dealt with in Section \ref{sec:Uniform}, we will be interested only in convergence of averages such as 
$
	\EE_{n \leq N(t)} \prod_{i=1}^r T^{p\tt_i(n)} f_i,
$
 as $t \to \infty$ for a specific sequence $N(t)$. We can afford to be quite flexible in choice of $N(t)$; the only conditions we need to impose are 
\begin{equation}
\label{cond:POLY:N(t)}
N(t) \tto{t\to\infty}{} \infty, \qquad \frac{N(t) - N(t+h)}{N(t)} \tto{t\to \infty}{} 0, \text{ for any } h \in \ZZ.
\end{equation}

As defined in the introduction, a polynomial family of polynomial sequences $p_i\tt(x) \in \ZZ[x,t]$, $i \in [r]$, is \emph{admissible} if $\deg_x p_i\tt(x) >0$ and $\deg_x (p_i\tt(x)- p_j\tt(x)) > 0$ for all $i \neq j$.

\begin{theorem}\label{ergo::thm:PET-uniform-2}
	Suppose that a m.p.s.\ $\fX = (X,T,\Borel,\mu)$ is weakly mixing and invertible, and that the sequence $N(t)$ obeys \eqref{cond:POLY:N(t)}. 
	Let $(p\tt_i)_{i=1}^r$ be an admissible polynomial family of sequences of polynomials. Let $f_i \in L^\infty(\mu)$. Then there exists a set $S \subset \NN$ with density $\densNat(S) = 1$ such that
	\begin{equation}
	\EE_{n \leq N(t)} \prod_{i=1}^r T^{p\tt_i(n)} f_i \xrightarrow[t \to \infty, t \in S]{L^2} \prod_{i=1}^r \int f_i d\mu.
	\label{ergo::eq:conv@thm:PET-uniform-2}
	\end{equation}
\end{theorem}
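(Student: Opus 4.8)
The plan is to run the same PET induction as in the proof of Theorem \ref{ergo::thm:PET-uniform-1}, but now keeping track of the extra variable $N$ that enters the coefficients polynomially, and to isolate a density-$1$ set $S$ of "good" parameters $t=N$ on which all the finitely many exceptional phenomena that arose in the uniform case (finitely many bad $h$, leading terms becoming constant, etc.) are controlled \emph{uniformly}. First I would set up the analogue of the characteristic vector and of Lemma \ref{ergo::fact:char-vect-props-uniform} for \emph{doubly polynomial} families $p_i^{(N)}(x)\in\ZZ[x,N]$: the key new point is that quantities like the leading coefficient of $p_i^{(N)}(x+h)-p_j^{(N)}(x)$, or the condition "$\deg$ of some difference drops", are themselves governed by polynomial identities in $N$ and $h$; so for each fixed $h$ they fail only for $N$ in a set of density $0$ (in fact a finite union of arithmetic-progression-like exceptional sets, or finitely many $N$), and one can take a countable intersection over the finitely many $h$ that the van der Corput step actually uses. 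As before, reduce to $\int f_i\,d\mu=0$, $\norm{f_i}_\infty\le 1$, and prove $\EE_{n\le N}\prod_i T^{p_i^{(N)}(n)}f_i\to 0$ in $L^2$ uniformly over $t=N\in S$.

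The inductive scheme is the same: given a doubly polynomial family with characteristic vector $\charN\ge(0,1,0,\dots)$, form $u_n=\prod_i T^{p_i^{(N)}(n)}f_i$, compute $\EE_{n\le N}\spr{u_n}{u_{n+h}}$ and rewrite it as in \eqref{ergo::eq:065} with the differenced family $\tilde p^{(N)}_h$ whose entries are $p_i^{(N)}(x)-p_r^{(N)}(x)$ and $p_i^{(N)}(x+h)-p_r^{(N)}(x)$. If $\charX{p^{(N)}}_1=0$ the differenced family is (for $N$ outside a density-$0$ set depending on $h$) admissible doubly polynomial with strictly smaller characteristic vector, and the inductive hypothesis gives a density-$1$ set $S_h$ on which the inner average is $o_{h;N\to\infty}(1)$ uniformly in $N\in S_h$; feeding $s_h=0$ (for the finitely many $h$ in play) into Lemma \ref{ergo::lem:vdCorput} yields the conclusion on $S=\bigcap_h S_h$, still density $1$. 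If $\charX{p^{(N)}}_1\ne 0$, split off the linear polynomials exactly as in the general case of Theorem \ref{ergo::thm:PET-uniform-1}: the linear pieces $a_i x+b_i^{(N)}$ contribute shift operators $T^{\tilde a_i n+\tilde b_i^{(N)}}$ whose $L^2$-contribution is handled by the $r=1$ argument (the average $\EE_{n\le N}T^{\tilde a_i n}(\,\cdot\,)$ is independent of $N$ and $T^{\tilde b_i^{(N)}}$ is an isometry), and the remaining higher-degree part is again a doubly polynomial family of strictly smaller characteristic vector, to which induction applies.

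The main obstacle I expect is bookkeeping the exceptional sets: in the singly-parametrised case one simply discards "finitely many $h$"; here each potential degeneracy (a leading coefficient of some $p_i^{(N)}(x+h)-p_j^{(N)}(x)$ vanishing, or a difference turning constant, or two polynomials in the differenced family coinciding so admissibility fails) is a polynomial condition in $N$, hence excludes at most a density-$0$ — indeed finite or "eventually periodic" — set of $N$, but one must verify these sets are genuinely density $0$ and that only finitely many of them are intersected, so that $S$ retains density $1$. A second, more delicate point is that the admissibility of $\tilde p^{(N)}_h$ as a \emph{doubly polynomial} family must be re-established at each stage: one needs the degree-in-$x$ of each difference to stay positive for all $N\in S$ simultaneously, not merely for large $N$, which is why the argument produces a density-$1$ set rather than a cofinite one (mirroring the $\densNat(S)=1$ conclusion of Theorem \ref{thm:main-polynomial} versus the "all but finitely many" conclusion for odd degree in Theorem \ref{thm:main}). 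Once these are in place, the van der Corput / well-ordering machinery terminates exactly as before.
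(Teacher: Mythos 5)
Your proposal misses the genuinely new difficulty in Theorem~\ref{ergo::thm:PET-uniform-2}, which lies in the base case $r=1$, $\deg p_1=1$. In the setting of Theorem~\ref{ergo::thm:PET-uniform-1} the linear coefficient $a_i$ is \emph{independent of $t$} by the definition of uniform admissibility, which is why $\EE_{n\le N}T^{a_i n}(\,\cdot\,)$ does not depend on $t$ and the $r=1$ case is trivial. For a doubly polynomial family $p_i^{(t)}(x)\in\ZZ[x,t]$, however, the admissibility condition only controls $\deg_x$; the linear coefficient $a(t)$ is itself a non-constant polynomial in $t$ (e.g.\ from $p(n)=(t-n)^k$ one gets, after differencing, linear pieces whose coefficient in $n$ is a polynomial in $t$ of degree $k-1$). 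Your parenthetical claim that ``the average $\EE_{n\le N}T^{\tilde a_i n}(\,\cdot\,)$ is independent of $N$'' is therefore false, and this is not a bookkeeping issue: no amount of excising density-$0$ exceptional sets of $t$ will make the $L^2$-norm $\norm{\EE_{n\le N}T^{a(t)n}f}$ converge to $0$ \emph{uniformly} in $t$, because for each fixed $N$ one can choose $t$ so that $a(t)$ is as large as one wishes. The paper handles this by passing, via the spectral theorem, to $\int_{\TT}\abs{\EE_{n\le N}e(a(t)n\alpha)}^2\,d\nu_f(\alpha)$, using atomlessness of $\nu_f$ (the one place weak mixing is really used here), splitting $\TT$ by whether $\fpa{a(t)\alpha}\ge\delta$, and then establishing---via Weyl equidistribution of $a(t)\alpha$ (and $(a(t)\alpha,a(t)\beta)$) as a polynomial sequence in $t$, together with the Ces\`aro-to-density-one Lemma~\ref{ergo::lem:caesaro-conv-unif}---that $\nu_f(\Gamma_{a(t),\delta})\to 0$ as $\delta\to 0$ uniformly over $t$ in a density-$1$ set $S$ (Observation~\ref{ergo::obs-int@thm:PET-uniform-2}). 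This spectral/equidistribution step is where the density-$1$ set $S$ actually originates, and it is entirely absent from your proposal.

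The rest of your outline (PET induction, van der Corput, splitting off linear terms, bounding the number of exceptional $h$ for each $t$ and then choosing $S$ so that $S\setminus S_h$ is finite for all $h$) is in the right spirit and matches the paper's structure, and your instinct that degeneracies are governed by polynomial conditions in $(N,h)$ is correct as far as the combinatorial bookkeeping goes. But without the base case argument the induction has nothing to bottom out on, so as written the proof does not go through.
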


\begin{proof}[Proof of Theorem \ref{thm:main-polynomial} assuming Theorem \ref{ergo::thm:PET-uniform-2}]
	Clearly, $N(t) = t$ satisfies \eqref{cond:POLY:N(t)}. The argument follows by a direct repetition of the proof of Theorem \ref{thm:main-uniform} in Section \ref{sec:Uniform}.
\end{proof}

\begin{remark}
	It is an immediate consequence of Bergelson's Theorem \ref{ergo::thm:PET-Bergelson} that the conclusion of Theorem \ref{ergo::thm:PET-uniform-2} also holds when the sequence $N(t)$, instead of obeying \eqref{cond:POLY:N(t)}, is sufficiently steeply increasing. This leaves open an interesting gap. It is possible that Theorem \ref{ergo::thm:PET-uniform-2} holds when \eqref{cond:POLY:N(t)} is replaced with the weaker condition $N(t) \to \infty$ as $t \to \infty$. In Proposition \ref{ergo::thm:PET-uniform-20} we verify this for a single linear polynomial. \end{remark}

We devote most of the rest of this section to proving Theorem \ref{ergo::thm:PET-uniform-2}. To begin with, we cite a simple lemma, which allows us to conveniently reformulate the problem.

\begin{lemma}\label{ergo::lem:caesaro-conv-unif}
Let $(a_n)_{n=1}^\infty$ be a sequence with $a_n \in [0,1]$. Then the following conditions are equivalent:
\begin{enumerate}
	\item We have convergence $ \EE_{n \leq N} a_n \xrightarrow[N \to \infty]{} 0 $.
	
	\item There exists $J \subset \NN$ with $\densNat(J) = 1$ such that $a_n \xrightarrow[n \to \infty, n \in J]{} 0$.
\end{enumerate}
\end{lemma}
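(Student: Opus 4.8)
The plan is to prove both implications by hand. The direction $(2) \Rightarrow (1)$ is the easier one: given $J \subset \NN$ of density $1$ with $a_n \to 0$ along $J$, I would split the Cesàro average $\EE_{n \leq N} a_n$ into the contribution of indices in $J$ and those outside $J$. For the indices outside $J$, since $a_n \in [0,1]$ and $\densNat(\NN \setminus J) = 0$, the number of such $n$ in $[N]$ is $o(N)$, so their total contribution to the average is $o(1)$. For the indices in $J$, fix $\e > 0$; beyond some threshold $n_0$ all $a_n$ with $n \in J$ are below $\e$, so the average over $J \cap [n_0, N]$ is at most $\e$, while the finitely many terms below $n_0$ contribute $o(1)$. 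Combining gives $\limsup_N \EE_{n \leq N} a_n \leq \e$ for every $\e$, hence the average tends to $0$.

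The direction $(1) \Rightarrow (2)$ is the substantive one, and the natural approach is a standard ``layer-cake'' or ``dyadic threshold'' argument. For each $k \in \NN$, let $J_k = \set{n \in \NN}{a_n < 1/k}$. I claim each $J_k$ has density $1$: indeed $\EE_{n \leq N} a_n \geq \frac{1}{k} \cdot \frac{\abs{[N] \setminus J_k}}{N}$, so $\frac{\abs{[N] \setminus J_k}}{N} \leq k \, \EE_{n \leq N} a_n \to 0$. Now I have a decreasing (after intersecting initial segments) family of density-$1$ sets $J_k$, and I want a single density-$1$ set $J$ along which $a_n \to 0$. This is exactly the situation handled by the previous lemma in the excerpt (the one asserting that for a sequence of density-$1$ sets $\{S_n\}$ there is a single density-$1$ set $S$ with $S_n \setminus S$ finite for each $n$): applying it to the family $\{J_k\}$ produces $J$ with $J_k \setminus J$ finite for every $k$. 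Then for $n \in J$ large enough (larger than the finitely many exceptions in $J_1 \setminus J, \dots, J_k \setminus J$), membership $n \in J$ forces $n \in J_k$, i.e. $a_n < 1/k$; since $k$ was arbitrary this gives $a_n \to 0$ as $n \to \infty$ along $J$.

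The main obstacle — really the only non-bookkeeping point — is the passage from the countable family $\{J_k\}$ of density-$1$ sets to a single density-$1$ set $J$ simultaneously ``absorbing'' all of them up to finite error. This is not automatic: a countable intersection of density-$1$ sets can have density $0$. However, this is precisely the content of the preceding lemma, whose proof chooses the threshold function $n(k) \to \infty$ slowly enough that density is preserved, so I would simply invoke it rather than redo the diagonalization. The remaining steps are routine $\e$-$N_0$ estimates and the elementary inequality $\EE_{n\leq N} a_n \geq \frac1k \cdot \frac{\abs{[N]\setminus J_k}}{N}$.
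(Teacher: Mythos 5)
The paper gives no proof of this lemma --- it only cites Einsiedler--Ward --- so there is no in-paper argument to compare against. Your proof supplies the standard one (the Koopman--von Neumann ``layer-cake'' argument, which is indeed what the cited reference does), and both directions are correct in substance: the splitting of the Cesàro average for $(2)\Rightarrow(1)$, and the Markov-type bound $|[N] \setminus J_k|/N \le k\, \EE_{n\le N} a_n$ showing $\densNat(J_k)=1$ for $(1)\Rightarrow(2)$, are exactly right.

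One small slip worth correcting: you say the preceding lemma produces $J$ with $J_k \setminus J$ finite, which echoes the printed statement of that lemma, but what that lemma's own proof actually establishes is $J \setminus J_k$ finite --- all but finitely many elements of $J$ lie in $J_k$ --- and that is the direction your subsequent reasoning requires. With the set difference pointed the right way, the conclusion ``for $n \in J$ past the finitely many exceptions in $J \setminus J_1, \dots, J \setminus J_k$, membership $n \in J$ forces $n \in J_k$, hence $a_n < 1/k$'' is exactly correct; with $J_k \setminus J$ finite it would not follow, since $J$ could still contain infinitely many $n \notin J_k$. (The paper's preceding lemma has this set difference printed backwards relative to what its proof shows; the conclusion there should read ``$S \setminus S_n$ is finite.'')
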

\begin{proof}
	See \cite[Chpt 2.7]{EinsiedlerWard}
\end{proof}

In the situation of Theorem \ref{ergo::thm:PET-uniform-2} we may always assume that $\int_X f_i d\mu = 0$ and $\norm{f_i} \leq 1$ for each $i$. With this assumption, Theorem \ref{ergo::thm:PET-uniform-2} will follow by Lemma \ref{ergo::lem:caesaro-conv-unif} if we are able to show that
	\begin{equation}
	\EE_{t \leq \T} \norm{ \EE_{n \leq N(t) } \prod_{i=1}^r T^{p\tt_i(n)} f_i}_{L^2(\mu)}^2 \tto{\T\to \infty}{} 0.
	\label{eq:POLY:001}
	\end{equation}
	
Expanding and using Cauchy-Schwartz, we conclude that \eqref{eq:POLY:001} will follow from
	\begin{align}
	\EE_{(t,n,m) \in \Phi(\T) }
		\prod_{i=1}^r T^{p\tt_i(n)} f_i 
		\cdot \prod_{i=1}^r T^{p\tt_i(m)} f_i \tto{\T\to \infty}{} 0,
	\label{eq:POLY:002}
	\end{align}
where the average is being taken over the set
\begin{align}
	\Phi(\T) = \set{(t,n,m) \in \ZZ^3}{1 \leq t \leq \T,\ 1 \leq n \leq N(t),\ 1 \leq m \leq N(t)}.\label{eq:POLY:003-def-of-Phi}
\end{align}

Recall that a sequence $(\Psi(\T))_{\T=1}^\infty$ of finite subsets of an abelian group $G$ is a \emph{\Folner\ sequence} if for any $h \in G$ it holds that 
$$
	\frac{ \abs{ (\Psi(\T) + h) \triangle \Psi(\T)} }{ \abs{\Psi(\T) } } \tto{\T\to \infty}{} 0,
$$
where $\triangle$ denotes the symmetric difference.
\begin{observation}
	If the sequence $N(t)$ obeys condition \eqref{cond:POLY:N(t)}, then the sequence $\Phi(\T)$ defined by \eqref{eq:POLY:003-def-of-Phi} is \Folner.
\end{observation}
\begin{proof}
	Fix a choice of $(h,a,b) \in \ZZ^3$. It will be convenient to assume a probabilistic perspective: we choose $(t,n,m) \in \Phi(\T)$ uniformly at random, and show that asymptotically almost surely as $\T\to \infty$ (shortened to a.a.s.) we have $(t+h,n+a,m+b) \in \Phi(\T)$.  
	
	It is easy to check that for any constants $A,B$ it holds a.a.s.\ that $A \leq t \leq \T-B$. In particular, a.a.s.\ $1 \leq t+h \leq \T$, and also $1 \leq n+a, m+b$. For any $s$, we may then estimate
	\begin{align*}
		\PP\left( n+a > N(t+h) \middle|\vphantom{a^b} t = s \right) 
		&\leq \PP\left( n+a > N(t) \middle|\vphantom{a^b} t = s\right) 
		\\ &+ \PP\left(N(t) \geq n+a > N(t+h) \middle|\vphantom{a^b} t=s \right)  
		\\ &\leq \frac{a}{N(s)} + \frac{N(s)-N(s+h)}{N(s)}
	\end{align*}
	For any choice of $\e>0$, it is true a.a.s.\ that $a/N(t) < \e$ and $\frac{N(t)-N(t+h)}{N(s)} < \e$; hence a.a.s.\ $n+a \leq N(t+h)$. By a symmetric argument, a.a.s.\  $m+b \leq N(t+h)$.
\end{proof}

	Averages such as \eqref{eq:POLY:002}, or more generally of the form
	\begin{equation}
		\EE_{n \in \Psi(\T)} \prod_{i=1}^r T^{q_i(n)} f_i,
	\label{eq:average}
	\end{equation}
	where $\Psi(\T)$ is a \Folner\ sequence in $\ZZ^d$ and $q_i \colon \ZZ^d \to \ZZ$ are polynomials, are well studied. To prove convergence of these averages to $\prod_{i=1}^r \int f_i d\mu$, one can in principle apply a standard PET induction argument reminiscent of that in \cite{Bergelson1987}. Unfortunately, this result is not recorded in the literature, but we can see it as a special case of a much stronger theorem.
	
	In a larger generality, averages such as \eqref{eq:average} have been studied by Leibman without the assumption that the system should be weakly mixing. Crucially, one can show that they converge in $L^2$. Even though, unlike the case of the classical ergodic theorem of von Neumann, the limit function will not in general be $T$-invariant, we have a convenient description of the limit in terms of the Host-Kra factors, which we will discuss shortly.

\newcommand{\bb}[1]{\underline{#1}}

	\begin{theorem}[Leibman \cite{Leibman2005b}]
	Suppose that a m.p.s.\ $\fX = (X,T,\Borel,\mu)$ is invertible. Let $\Psi(\T)$ be a \Folner\ sequence, and let $q_i \colon \ZZ^d \to \ZZ$ be polynomials and $f_i \in L^\infty(\mu)$ for $i \in [r]$. Then, the averages
	\begin{equation}
		\EE_{\bb n \in \Psi(\T)} \prod_{i=1}^r T^{q_i(\bb n)} f_i
		\label{eq:POLY:004-averages-Folner-poly}
	\end{equation}
	converge in $L^2(X)$ as $\T\to \infty$. Moreover, supposing that $\deg q_i > 0$ and $\deg( q_i - q_j) > 0$ for each $i \neq j$, there exists an integer $k$, dependent only on $r$ and the maximal degree $\max_{i \in [r]} \deg q_i$, such that the Host-Kra factor $\fZ_k$ of $\fX$ is characteristic for convergence of the averages \eqref{eq:POLY:004-averages-Folner-poly}, in the sense that 
	$$
	\lim_{\T\to \infty} \EE_{\bb n \in \Psi(\T)} \prod_{i=1}^r T^{q_i(\bb n)} f_i = 
	\lim_{\T\to \infty}  \EE_{\bb n \in \Psi(\T)} \prod_{i=1}^r T^{q_i(\bb n)} \EE(f_i | \fZ_k).
	$$
	\end{theorem}

In fact, the convergence of \eqref{eq:POLY:004-averages-Folner-poly} is established by verifying convergence under the additional assumption that $\fX$ is a nilsystem \cite{Leibman2005a} (or indeed an inverse limit of nilsystems), and then combining the fact that $\fZ_k$ is characteristic for \eqref{eq:POLY:004-averages-Folner-poly} with the fact that $\fZ_k$ is an inverse limit of nilsystems \cite{HostKra2005}. We note that the second part of this theorem is not stated in this precise language in \cite{Leibman2005b}, but our restatement is an immediate consequence of Theorem 3 therein, by a standard telescoping argument. 

The Host-Kra factors are introduced in \cite{HostKra2005} (see also \cite{Ziegler2007}), and are most easily described in terms of the Host-Kra-Gowers norms $\normHK{\cdot}_k$. Define inductively $\normHK{f}_0 = \int f d\mu$ for $f \in L^\infty(\mu)$, and for $k \geq 0$
$$
	\normHK{f}_{k+1}^{2^{k+1}} = \lim_{N \to \infty} \EE_{n \leq N} \normHK{f \cdot T^n f}_{k}^{2^{k}},
$$
where $\normHK{f}_{k+1} \geq 0$. It can be checked that this definition is well posed, and indeed defines a norm for $k \geq 1$. The factor $\fZ_k$ is then characterised by the property that $\EE(f | \fZ_k) = 0$ if and only if $\normHK{f}_{k+1} = 0$, where $f \in L^2(\mu)$. For the purpose of this paper, the key point is that for weakly mixing systems, the Host-Kra factors are trivial.

\begin{fact}
	Suppose that the m.p.s.\ $\fX$ is weakly mixing. Then, for any $k$, the Host-Kra factor $\fZ_k$ of $\fX$ is trivial. In particular $\EE(f|\fZ_k) = \int_X f d\mu$ for each $f \in L^\infty(\mu)$. 
\end{fact}	
\begin{proof}
	This is mentioned e.g.\ in \cite{HostKra2005}. With implicit definition of $\fZ_k$ as above, one can prove by a simple induction that $\normHK{f}_{k}^{2^k} = \bra{ \int_X f d\mu }^{2^k}$ for each $k$, from which the claim easily follows.
\end{proof}

	\begin{corollary}\label{ergo::cor:Leibman-for-WM}
	Suppose that a m.p.s.\ $\fX = (X,T,\Borel,\mu)$ is invertible. Let $\Psi(\T)$ be a \Folner\ sequence, let $q_i \colon \ZZ^d \to \ZZ$ be polynomials with $\deg q_i > 0$ and $\deg(q_i - q_j) > 0$ for $i \neq j$, and $f_i \in L^\infty(\mu)$ for $i \in [r]$. Then, 
	\begin{equation}
		\EE_{\bb n \in \Psi(\T)} \prod_{i=1}^r T^{q_i(\bb n)} f_i \xrightarrow[\T\to \infty]{L^2} \prod_{i=1}^r \int_X f_i d\mu.
		\label{eq:POLY:005}
	\end{equation}
	\end{corollary}

We are now ready to finish our argument.

\begin{proof}[Proof of Theorem \ref{ergo::thm:PET-uniform-2}]
	Apply Corollary \ref{ergo::cor:Leibman-for-WM} to the averages \eqref{eq:POLY:002}, together with previous discussion in this section.\end{proof}

\begin{remark}
	Using the same argument, one can prove Theorem \ref{ergo::thm:PET-uniform-2} under a weaker assumption. Indeed, the condition that $\cA$ should be a weakly-mixing set can be replaced by the weaker requirement that $\cA$ should give rise to an ergodic m.p.s.\ $\fX_\cA$ via the construction in Section \ref{sec:Definitions}, and that $1_\cA$ should be orthogonal to the $k$-th Host-Kra factor of $\fX_\cA$, in the sense that $\EE(1_\cA - \densNat(\cA)1_{X_\cA} | \fZ_k) = 0$, for some sufficiently large $k$.
\end{remark}

\section{Concluding remarks}\label{sec:End}
We end with a slightly stronger version of Theorem \ref{ergo::thm:PET-uniform-2} for a single polynomial of degree $1$, as suggested in an earlier remark.

\begin{proposition}\label{ergo::thm:PET-uniform-20}
	Suppose that a m.p.s.\ $\fX = (X,T,\Borel,\mu)$ is weakly mixing and invertible, and let $p\tt(x) \in \ZZ[t,x]$ be polynomial with $\deg_x p\tt(x) =1$, and $f \in L^\infty(\mu)$. Then there exists a set $S \subset \NN$ with density $\densNat(S) = 1$ and
	\begin{equation}
	\EE_{n \leq N} T^{p\tt(n)} f \xrightarrow[N \to \infty]{L^2} \int f d\mu 
	,\quad\mbox{uniformly in $t \in S$.} 
	\label{ergo::eq:conv@thm:PET-uniform-20}
	\end{equation}
\end{proposition}
\begin{proof}
	Write $p\tt(x) = a(t) x + b(t)$. Since $T^{b(t)}$ is an isometry, without loss of generality we may assume that $b(t) = 0$. As usual, we may assume that $\int f d\mu = 0$ and $\norm{f}_\infty \leq 1$. If $a(t)$ is constant in $t$, we are done e.g.\ by Bergelson's theorem \ref{ergo::thm:PET-Bergelson}, so assume this is not the case.
	
		By the spectral theorem, there is a probability measure $\nu = \nu_f$ on $\TT$ such that:
	\begin{align*}
	\norm{ \EE_{n \leq N} T^{p\tt (n)} f }^2 
	= \norm{\EEl{n}{N} e\left(p(n)\a\right)}_{L^2(\TT,\nu)}^2
	&= \int_\TT \abs{\EEl{n}{N} e\left(a(t)n \a\right)}^2 d \nu(\a),
	\end{align*}
	where as usual $e(\a) = e^{2 \pi i \a}$.
	It is a well-known fact that since $\fX$ is weakly mixing, the measure $\nu$ has no atoms.	
	
	We have the elementary inequality $\abs{1 - e(\a)} \geq 4 \fpa{\a}$. Thus, if for some $\de > 0$ and $\a \in \TT$ we have $\fpa{ a \a} \geq \delta$ then:
	\begin{align}
	\abs{ \EEl{n}{N} e\left(an \a\right) } =  \abs{ \frac{1 - e(a N\a)}{ N (1 - e(a \a) ) } }^2 \ll \frac{1}{(N \de)^2}
	\label{ergo::eq:bound-001@thm:PET-uniform-2}
	\end{align}	
	where the implied constant is absolute (and equal to $\frac{1}{4}$).
	
	Let us denote by $\Gamma_{a,\de}$ the set of $\a \in \TT$ for which we have inequality $\fpa{a \a} < \de$. Using \eqref{ergo::eq:bound-001@thm:PET-uniform-2} for $\a \not \in \Gamma_{a, \de}$ and the trivial bound for $\a \in \Gamma_{\a,\de}$ we find:
	$$ 
		\int_\TT \abs{\EEl{n}{N} e\left(a n \a\right)}^2 d \nu(\a) \leq \nu\left(\Gamma_{a,\de}\right) + \frac{O(1)}{(N\de)^2} .
	$$
	We claim that there exists a set $S \subset \NN$ with $\densNat(S) = 1$ such that:
	\begin{align}
		\nu\left(\Gamma_{a(t),\de}\right) \xrightarrow[\de \to 0]{} 0, \quad \mbox{uniformly in $t\in S$.}
	\label{ergo::eq:bound-002@thm:PET-uniform-2}
	\end{align}
	Suppose that the claim has been established. We then have:
	\begin{align*}
	\norm{ \EE_{n \leq N} T^{p\tt (n)} f }^2 
	= o_{\de \to 0}(1) + o_{\de; N \to \infty}(1) = o_{N \to \infty}(1),
	\end{align*}
	with decay rates uniform in $t \in S$. This finishes the proof of the theorem. Hence, it remains to find $S$ with \eqref{ergo::eq:bound-002@thm:PET-uniform-2}. Our construction relies on the following observation.

	\begin{lemma}\label{ergo::obs-int@thm:PET-uniform-2}
	Given $\de$, there exists a set $S_\de$ with density $1$ such that
	\begin{align} \abs{ \nu\left(\Gamma_{a(t),\de}\right) - \lambda \left(\Gamma_{a(t),\de}\right)} \xrightarrow[t \to \infty,\ t \in S_\de]{} 0,
	\label{ergo::eq:measure-convergence@thm:PET-uniform-2}
	\end{align}
	where $\lambda$ denotes that Lebesgue measure (so $\lambda \left(\Gamma_{a(t),\de}\right) = 2 \delta$ for almost all $t$).
	\end{lemma}	
	\begin{proof}
	Because of the Lemma \ref{ergo::lem:caesaro-conv-unif}, it will suffice to prove that for fixed $\de > 0$ we have:
	 \newcommand{\pars}[1]{\left( #1 \right)}
	\begin{align*} 
	\EE_{t \leq \T} \pars{ \nu\left(\Gamma_{a(t),\de}\right) - \lambda \left(\Gamma_{a(t),\de}\right)}^2 \xrightarrow[\T\to \infty]{} 0,	
	\end{align*}
	 This will follow once we show that:
	 \begin{align*}
	  \EEl{t}{\T} \nu\left(\Gamma_{a(t),\de}\right) \xrightarrow[\T\to \infty]{}  \lambda \left(\Gamma_{a(t),\de}\right) ,\qquad
	 \EEl{t}{\T}  \nu\left(\Gamma_{a(t),\de}\right)^2 \xrightarrow[\T\to \infty]{}  \lambda \left(\Gamma_{a(t),\de}\right)^2. 
	 \end{align*}
For the first limit, we can rewrite:
	 \begin{align*}
	 \lim_{\T\to \infty} \EEl{t}{\T} \nu\left(\Gamma_{a(t),\de}\right)
	 &= \lim_{\T\to \infty} \int_\TT  \EEl{t}{\T} \chi_{\de}( a(t)\a ) d \nu(\a)
	 \\&= \int_\TT \lim_{\T\to \infty} \EEl{t}{\T} \chi_{\de}( a(t)\a ) d \nu(\a),
	  \end{align*}	 
	 where $\chi_{\de}$ denotes the characteristic function of the interval $(-\de,\de) \Mod{1}$, and the interchange of limit and the integral is justified by the dominated convergence theorem (assuming that the last limit exists).
	 
	For $\a \not \in \QQ$, since $\chi_\de$ is Riemann-integrable and $\a a(t)$ is polynomial with irrational coefficients, we have by a classical theorem of Weyl that:
	$$ \lim_{\T\to \infty} \EEl{t}{\T} \chi_{\de}( a(t)\a ) = \int_\TT \chi_{\de}(\a) d\lambda = \lambda(\Gamma_{a(t),\de}). $$	 
	As for $\a \in \QQ$, we know that $\nu$ has no atoms so $\nu(\QQ) = 0$. It follows that:
	 \begin{align*}
	 \lim_{\T\to \infty} \EEl{t}{\T} \nu\left(\Gamma_{a(t),\de}\right)
	 = \lambda(\Gamma_{a(t),\de}).
	  \end{align*}	 
For the second limit, we can proceed analogously. We first rewrite:
	 \begin{align*}
	 \lim_{\T\to \infty}  \EEl{t}{\T} \nu\left(\Gamma_{a(t),\de}\right)^2
	 &= \lim_{\T\to \infty} \int_{\TT \times \TT}  \EEl{t}{\T} 
	 \chi_{\de} ( a(t)\a ) \cdot \chi_{\de}  (a(t)\b ) d \nu(\a)d \nu(\b)
	 \\&= \int_{\TT \times \TT} \lim_{\T\to \infty}  \EEl{t}{\T} 
	 (\chi_{\de} \times \chi_{\de}) ( a(t)\a,\ a(t)\b ) d (\nu \times \nu)(\a,\b).
	 \end{align*}	 

	The set of $(\a,\b) \in \TT\times\TT$ which are linearly dependent over $\QQ$ has $\nu\times\nu$ measure $0$, since it is a union of countably many lines $\set{ (\a,\b)}{ k \a + l \b + m = 0}$ ($k,l,m\in\ZZ$) whose measure is $0$ by e.g.\ Fubini's theorem combined with $\nu$ having no atoms.

	If $(\a,\b)$ are $\QQ$-linearly independent, then by Weyl's theorem, the sequence $( a(t)\a,\ a(t)\b )$ is equidistributed. It follows that:
	$$\lim_{\T\to \infty}  \EEl{t}{\T} 
	 (\chi_{\de} \times \chi_{\de}) ( a(t)\a,\ a(t)\b ) = \int_{\TT \times \TT} (\chi_{\de} \times \chi_{\de}) d(\lambda \times \lambda) .$$
	As a consequence, we have the sought convergence:
	 \begin{align*}
	 \lim_{\T\to \infty} \EEl{t}{\T} \nu\left(\Gamma_{a(t),\de}\right)^2 = \left(\Gamma_{a(t),\de}\right)^2.
	\end{align*}	
	This finishes the proof of the lemma.
	\end{proof}	
	
\begin{observation}
	Let $\{ S_n \}_{n =1}^\infty$ be a sequence of sets $S_n \subset \NN$ with density $1$. Then there exists a single set $S \subset \NN$ with density $1$ such that for each $n$, $S \setminus S_n$ is finite.
\end{observation}
\begin{proof}
	We may assume without loss of generality that the family $S_n$ is descending, else we may replace $S_n$ by $\bigcap_{m \leq n} S_n$.
	We  define $S$ by declaring that $k \in S$ if and only if $k \in S_{n(t)}$ where $n(k)$ is an increasing function yet to be determined. If $n(k) \to \infty$ as $k \to \infty$, then clearly $S \setminus S_n$ is finite for any $n$. 
	It remains to check that if $n(t)$ increases sufficiently slowly, then $S$ has density $1$. This is a simple consequence of the fact that for each $n$, $S_n$ has density $1$.
\end{proof}	
	
	We are now ready to finish the proof of the proposition. Let $S_\de$ be the sets constructed in the above Lemma \ref{ergo::obs-int@thm:PET-uniform-2}. Let $S$ be a set with $\densNat(S) = 1$ and $S \setminus S_\de$ finite for each $\de \in \QQ$.	We then have for $t \in S$:
	\begin{align*}
		\nu\left(\Gamma_{a(t),\de}\right) = \lambda\left(\Gamma_{a(t),\de}\right) + o_{\delta;t \to \infty}(1).
	\end{align*}
	We need to show that for any $\e > 0$ one can find $\de(\e)$ such that for $\de < \de(\e)$ and any $t \in S$ we have 
	\begin{align}
	\nu\left(\Gamma_{a(t),\de}\right) < \e.
		\label{ergo::eq:bound-003@thm:PET-uniform-2}
	\end{align}
	
	Let $\de_0$ be such that $\lambda\left(\Gamma_{a(t),\de_0}\right) = 2 \de_0 < \e/2$. We can then find $t_0$ such that for $t > t_0$, $t \in S$ we have $ \nu\left(\Gamma_{a(t),\de_0}\right) < \e/2 + \e/2 = \e.$ Since $\nu\left(\Gamma_{a(t),\de}\right)$ is decreasing in $\de$, the bound \eqref{ergo::eq:bound-003@thm:PET-uniform-2} holds for any $\de < \de_0$, and $t > t_0,\ t \in S$.
	
	 On the other hand, for each $1 \leq t \leq t_0$, because $\bigcap_{\de > 0} \Gamma_{a(t),\de}$ is a finite set and $\nu$ is atomless, there is some $\delta_t > 0$ such that \eqref{ergo::eq:bound-003@thm:PET-uniform-2} holds for $\delta < \delta_t$.
	 
	 Taking $\de(\e) = \min_{t} \de_t$ with $t$ running over $[t_0] \cup \{0\}$ we find that \eqref{ergo::eq:bound-003@thm:PET-uniform-2} holds for all $\de < \de(\e)$. This finishes the proof of the claim.
\end{proof}

\appendix

\section{}

\begin{proof}[Proof of Lemma \ref{ergo::lem:vdCorput}]
	For fixed $H$ we have:
	$$ \norm{\EE_{n \leq N} \EE_{h \leq H} u\tt_{n+h}}^2 = \norm{ \EE_{n \leq N} u\tt_n}^2 + O\left(1/N\right).$$
	By Cauchy-Schwartz inequality, we can bound:
	$$ \norm{\EE_{n \leq N} \EE_{h \leq H} u\tt_{n+h}}^2 \leq \EE_{h,h' \leq H} \abs{ \EE_{n\leq N} \spr{u\tt_{n+h}}{u\tt_{n+h'}} } .$$
	For each summand above we have the bound:
	$$
	\abs{ \EE_{n\leq N} \spr{u\tt_{n+h}}{u\tt_{n+h'}} } = 
	\abs{ \EE_{n\leq N} \spr{u\tt_{n}}{u\tt_{n+\abs{h-h'}}} } +  o_{H; N \to \infty}\left(1\right)
	\leq s\tt_h + o(1).
	$$
	It follows that:
	$$ \norm{ \EE_{n \leq N} u\tt_n}^2 \leq \EE_{h,h' \leq H} s\tt_{\abs{h-h'}} + o_{H; N\to \infty}(1).$$
	Summing by parts gives:
	$$\EE_{h,h' \leq H} s\tt_{\abs{h-h'}} \leq \EE_{h\leq H} \frac{h}{H} \EE_{h' \leq h} s\tt_h + o_{H \to \infty}(1).$$
	Let $K \leq H$ be arbitrary. Splitting the above average into $h \leq K$ and $ K < h \leq H$ gives:
	$$ \EE_{h\leq H} \frac{h}{H} \EE_{h' \leq h} s\tt_h \leq \frac{K}{H} + \max_{K < h \leq H} s\tt_h
	= o_{K;H\to \infty}(1) + o_{K \to \infty}(1)	
	.$$
	It follows that:
	$$ \norm{\EE_{n \leq N} u\tt_{n}}^2 = o_{H;N\to \infty}(1) + o_{K;H\to \infty}(1) + o_{K \to \infty}(1),$$
	which implies the sought convergence.
\end{proof}
 
\bibliographystyle{abbrv}
\bibliography{bibliography}

\begin{thebibliography}{10}

\bibitem{Bergelson1987}
V.~Bergelson.
\newblock Weakly mixing {PET}.
\newblock {\em Ergodic Theory Dynam. Systems}, 7(3):337--349, 1987.

\bibitem{BergelsonLeibman1996}
V.~Bergelson and A.~Leibman.
\newblock Polynomial extensions of van der {W}aerden's and {S}zemer\'edi's
  theorems.
\newblock {\em J. Amer. Math. Soc.}, 9(3):725--753, 1996.

\bibitem{BergelsonLeibmanLesigne2007}
V.~Bergelson, A.~Leibman, and E.~Lesigne.
\newblock Intersective polynomials and the polynomial {S}zemer\'edi theorem.
\newblock {\em Adv. Math.}, 219(1):369--388, 2008.

\bibitem{EinsiedlerWard}
M.~Einsiedler and T.~Ward.
\newblock {\em Ergodic theory with a view towards number theory}, volume 259 of
  {\em Graduate Texts in Mathematics}.
\newblock Springer-Verlag London, Ltd., London, 2011.

\bibitem{Fish2007b}
A.~Fish.
\newblock Polynomial largeness of sumsets and totally ergodic sets.
\newblock {\em Online J. Anal. Comb.}, (5):19, 2010.

\bibitem{Fish2007a}
A.~Fish.
\newblock Solvability of linear equations within weak mixing sets.
\newblock {\em Israel J. Math.}, 184:477--504, 2011.

\bibitem{Furstenberg1977}
H.~Furstenberg.
\newblock Ergodic behavior of diagonal measures and a theorem of {S}zemer\'edi
  on arithmetic progressions.
\newblock {\em J. Analyse Math.}, 31:204--256, 1977.

\bibitem{Furstenberg1981}
H.~Furstenberg.
\newblock {\em Recurrence in ergodic theory and combinatorial number theory}.
\newblock Princeton University Press, Princeton, N.J., 1981.
\newblock M. B. Porter Lectures.

\bibitem{FurstenbergKatznelson1978}
H.~Furstenberg and Y.~Katznelson.
\newblock An ergodic {S}zemer\'edi theorem for commuting transformations.
\newblock {\em J. Analyse Math.}, 34:275--291 (1979), 1978.

\bibitem{HostKra2005}
B.~Host and B.~Kra.
\newblock Nonconventional ergodic averages and nilmanifolds.
\newblock {\em Ann. of Math. (2)}, 161(1):397--488, 2005.

\bibitem{Konieczny2016a}
J.~Konieczny.
\newblock {Sets of recurrence as bases for the positive integers; preprint
  arXiv:1504.02410 [math.NT]}.
\newblock 2016.

\bibitem{Leibman2005b}
A.~Leibman.
\newblock Convergence of multiple ergodic averages along polynomials of several
  variables.
\newblock {\em Israel J. Math.}, 146:303--315, 2005.

\bibitem{Leibman2005a}
A.~Leibman.
\newblock Pointwise convergence of ergodic averages for polynomial sequences of
  translations on a nilmanifold.
\newblock {\em Ergodic Theory Dynam. Systems}, 25(1):201--213, 2005.

\bibitem{Sarkozy1978c}
A.~S{\'a}rk{\"o}zy.
\newblock On difference sets of sequences of integers. {III}.
\newblock {\em Acta Math. Acad. Sci. Hungar.}, 31(3-4):355--386, 1978.

\bibitem{Szemeredi1969}
E.~Szemer{\'e}di.
\newblock On sets of integers containing no four elements in arithmetic
  progression.
\newblock {\em Acta Math. Acad. Sci. Hungar.}, 20:89--104, 1969.

\bibitem{Ziegler2007}
T.~Ziegler.
\newblock Universal characteristic factors and {F}urstenberg averages.
\newblock {\em J. Amer. Math. Soc.}, 20(1):53--97 (electronic), 2007.

\end{thebibliography}

\end{document}